\definecolor{darkblue}{rgb}{0.0, 0.0, 0.55}
\definecolor{bordeaux}{rgb}{0.34, 0.01, 0.1}
\newtheorem{theorem}{Theorem}[section]
\newtheorem{example}[theorem]{Example}
\newtheorem{proposition}[theorem]{Proposition}
\newtheorem{remark}[theorem]{Remark}
\def\R{{\mathbb{R}}}
\def\N{{\mathbb{N}}}
\def\x{{\mathbf{x}}}
\def\y{{\mathbf{y}}}
\def\s{{\mathbf{s}}}
\def\be{{\mathbf{e}}}
\def\a{{\boldsymbol{\alpha}}}
\def\b{{\boldsymbol{\beta}}}
\def\g{{\boldsymbol{\gamma}}}
\def\A{{\mathscr{A}}}
\def\B{{\mathscr{B}}}
\def\supp{\hbox{\rm{supp}}}
\def\Tr{\hbox{\rm{Tr}}}
\def\int{\hbox{\rm{int}}}
\def\New{\hbox{\rm{New}}}
\def\Conv{\hbox{\rm{conv}}}
\def\SE{\hbox{\rm{SE}}}
\def\SDSOS{\hbox{\rm{SDSOS}}}
\newcommand{\vge}{\mathbin{\rotatebox[origin=c]{90}{$\ge$}}}
\newif\ifcomment
\newcommand{\newtssos}[1]{{{\color{black}#1}}}
\begin{document}

\title[Chordal-TSSOS: a moment-SOS hierarchy that exploits term sparsity]{Chordal-TSSOS: a moment-SOS hierarchy that exploits term sparsity with chordal extension}
\author{Jie Wang \and Victor Magron \and Jean-Bernard Lasserre}
\subjclass[2010]{Primary, 14P10,90C25; Secondary, 12D15,12Y05}
\keywords{sparsity pattern, polynomial optimization, moment relaxations, sum of squares, semidefinite programming}
\date{\today}

\begin{abstract}
\newtssos{This work is a follow-up and a complement to \cite{wang2}} for solving polynomial optimization problems (POPs). The chordal-TSSOS hierarchy that we propose is a new sparse moment-SOS framework based on \emph{term-sparsity} and \emph{chordal extension}. By exploiting term-sparsity of the input polynomials we obtain a two-level hierarchy of semidefinite programming relaxations. The novelty and distinguishing feature of
such relaxations is to obtain quasi block-diagonal matrices obtained in an iterative procedure that performs chordal extension of certain adjacency graphs. The graphs are related to the terms arising in the original data and {\em not} to the links between variables. Various numerical examples demonstrate the efficiency and the scalability of this new hierarchy for both unconstrained and constrained POPs.
\newtssos{The two hierarchies are complementary. While the former TSSOS \cite{wang2} has a theoretical convergence guarantee, the chordal-TSSOS has superior performance but lacks this theoretical guarantee.}
\end{abstract}

\maketitle

\section{Introduction}
Consider the polynomial optimization problem (POP):
\begin{equation*}
(\textrm{Q}):\quad f^*=\inf_{\x}\,\{\,f(\x) : \x\in\mathbf{K}\,\},
\end{equation*}
where $f(\x)\in\R[\x]=\R[x_1,\ldots,x_n]$ is a polynomial and $\mathbf{K}\subseteq\R^{n}$ is the basic semialgebraic set
\begin{equation*}
\mathbf{K} := \{\x\in\R^{n} : g_j(\x)\ge 0, j = 1,\ldots,m\},
\end{equation*}
for some polynomials $g_j(\x)\in\R[\x], j = 1,\ldots,m.$ 
The so-called moment-SOS hierarchy \cite{las1} (where SOS stands for \emph{sum of squares})  
is a powerful approach based on certain specific positivity certificates of real algebraic geometry.
It results in solving a hierarchy of semidefinite program (SDP) relaxations of (Q) whose associated monotone sequence of optimal values converges to $f^*$ from below; in fact the convergence is even finite \emph{generically} \cite{nie2014optimality}. However, in view of the present status of SDP solvers, the moment-SOS hierarchy does not scale well and is so far limited to problems of modest size. 

To address the issue of scalability, an important research direction is to define alternative relaxations of (Q) with cheaper computational cost and 
still with good convergence properties. 
One possibility is to define hierarchies of relaxations of (Q)
based on other positivity certificates. Such alternative positivity certificates are in general weaker but their implementation is much easier as it results in LP-relaxations (e.g. as in DSOS \cite{ahmadi2014dsos}),  second-order cone relaxations (e.g., as in SDSOS \cite{ahmadi2014dsos}), or cheaper SDP-relaxations (e.g. as in BSOS \cite{TohLasserre}). 

Another possibility is to address \emph{sparsity} often present in the description of large-scale instances of (Q). One classical approach is to consider  so-called \emph{correlative sparsity} patterns,  developed in \cite{Las06,waki}. This is represented by the correlative sparsity pattern (csp) graph which grasps the links between variables. Concretely, the nodes of a csp graph correspond to the variables and there is an edge between two nodes (variables) if and only if these two variables appear in the same term of the objective polynomial $f$ or appear in the same polynomial $g_j$ involved in the description of $\mathbf{K}$. One then partitions the variables into blocks according to the maximal cliques of the chordal extension of the csp graph to obtain a moment-SOS hierarchy for (\textrm{Q}) with quasi block-diagonal SDP matrices. If each maximal clique has a small size, this will significantly reduce the computational cost. This approach has been successively applied for solving optimal powerflow problems~\cite{Josz16}, roundoff error bound analysis~\cite{toms17,toms18}, or more recently to approximate the volume of sparse semialgebraic sets~\cite{Tacchi19}.

Nevertheless many POPs that are fairly sparse do not fulfill the correlative sparsity pattern. For instance, if $f$ has a term involving all variables or some constraint $g_j$ (e.g. $\Vert\x\Vert^2$) involves all variables, then the correlative sparsity pattern fails. Besides, even if a POP admits a correlative sparsity pattern, some maximal clique of the csp graph may still have a big size (like over $20$ variables), which makes the resulting SDP problem  still hard to solve.

However, instead of exploiting sparsity from the perspective of \emph{variables}, one can also exploit sparsity from the perspective of 
\emph{terms} as described in \cite{wang,wang2}. This is the route followed in this paper.
\subsection*{Novelty with respect to \cite{wang2}} In \cite{wang2}, we exploit the term-sparsity occurring in the description of (Q) so as to define a sparsity-adapted version of the moment-SOS hierarchy, which scales much better with the size of the initial problem (Q). 
Roughly speaking, the sparsity considered in \cite{wang2} can be also represented by a graph, which is called a {\em term-sparsity pattern (tsp) graph}. 
But unlike the csp graph, the nodes of a tsp graph correspond to monomials (not variables) and the edges of the graph grasp the links between monomials in the SOS representation of positive polynomials. 
In \cite{wang2}, we design an iterative procedure to enlarge the tsp graph in order to iteratively exploit the term-sparsity in (\textrm{Q}). 
Each iteration consists of two steps: (i) a support-extension operation and (ii) a block-closure operation on adjacency matrices.

\newtssos{We first propose to replace the second step from \cite{wang2} by a chordal-extension operation.}
In doing so we obtain a sequence
\begin{equation*}
    G_1\subseteq G_2\subseteq \cdots\subseteq G_r,
\end{equation*}
of graphs where ``$G_{i}\subseteq G_{i+1}$'' means that $G_{i}$ is a subgraph of $G_{i+1}$.
\newtssos{The main difference with \cite{wang2} is that (ii) now consists of performing an (approximately) minimal chordal extension instead of performing completion of the connected components of each graph.}
Then combining this iterative procedure with the standard moment-SOS hierarchy results in a two-level moment-SOS hierarchy with quasi block-diagonal SDP matrices. When the size of blocks is small, then the associated SDP relaxations are drastically much cheaper to solve.

To some extent, the term-sparsity (focusing on monomials) is finer than the correlative sparsity (focusing on variables). If a POP is sparse in the sense of correlative sparsity, which means that the csp graph is not complete, then it must be sparse in the sense of term-sparsity, which means that the tsp graph is not complete, while the converse is not necessarily true. So the basic idea for solving large-scale POPs is as follows: first apply correlative sparsity to obtain a coarse decomposition in terms of variables with cliques, and second apply term-sparsity to the cliques that still have a big size.

\subsection*{Contribution}
We provide a new sparse moment-SOS framework based on \emph{term-sparsity} and \emph{chordal graphs}, following the route of our previous paper ~\cite{wang2}. 
It is in deep contrast to the approach based on the sole correlative sparsity and provides a new item in the arsenal of sparsity-exploiting techniques for moment-SOS hierarchies of POPs. More precisely:

$\bullet$ We provide an iterative procedure that exploits term-sparsity in POPs.
The case of unconstrained polynomial optimization is treated in \S~\ref{sec3} and the case of constrained polynomial optimization is treated in \S~\ref{sec4}.
It results in a two-level moment-SOS hierarchy that we  \newtssos{call the {\em chordal-TSSOS hierarchy} (as the ``TSSOS'' terminology was used in our prior work \cite{wang2}). The resulting SDP has {\em quasi block-diagonal} SDP matrices, which is the crucial feature of the 
chordal-TSSOS hierarchy.}


$\bullet$ In \S~\ref{sec6} we provide a computational cost  estimate for the first step   of the chordal-TSSOS hierarchy 
via a careful analysis of the structure of tsp graphs.

$\bullet$ In \S~\ref{sec:benchs} we provide various numerical experiments to illustrate that POPs with significantly large size (up to $200$ variables) and without correlative sparsity,  can be solved by our chordal-TSSOS hierarchy. 

We emphasize that in \emph{all} numerical examples tested in this paper (except the Broyden banded function from \cite{waki}), the usual correlative sparsity pattern is dense or almost dense and so yields no or little computational savings (or cannot even be implemented). 
Concerning Broyden banded functions, even though both tsp and csp apply,  
tsp still yields blocks whose size is smaller than those obtained with csp, and thus results in lower computational cost for the corresponding SDPs. 

\newtssos{Therefore the chordal-TSSOS hierarchy should be considered as 
a \emph{complement} to TSSOS \cite{wang2} rather than just a \emph{variant}. Indeed
on the one hand, TSSOS has a \emph{guaranteed convergence property} with good efficiency reported  in \cite{wang2} when compared to other hierarchies. 
On the other hand, while chordal-TSSOS lacks theoretical convergence guarantee, in practice it has 
superior performance (and with finite convergence in many cases).  
Hence a user should start with chordal-TSSOS and possibly turns to TSSOS if the convergence does not take place.}

\section{Notation and Preliminaries}
\subsection{SOS polynomials}\label{intro1}
Let $\x=(x_1,\ldots,x_n)$ be a tuple of variables and $\R[\x]=\R[x_1,\ldots,x_n]$ be the ring of real $n$-variate polynomials. For a subset $\A\subseteq\N^n$, we denote by $\Conv(\A)$ the convex hull of $\A$. A polynomial $f\in\R[\x]$ can be written as $f(\x)=\sum_{\a\in\A}f_{\a}\x^{\a}$ with $\A\subseteq\N^n$ and $f_{\a}\in\R, \x^{\a}=x_1^{\alpha_1}\cdots x_n^{\alpha_n}$. The support of $f$ is defined by $\supp(f)=\{\a\in\A\mid f_{\a}\ne0\}$, and the Newton polytope of $f$ is defined as $\New(f)=\Conv(\{\a:\a\in\supp(f)\})$. We use $|\cdot|$ to denote the cardinality of a set.

For a finite set $\A\subseteq\N^n$, 
let $\x^{\mathscr{A}}$ be the $|\mathscr{A}|$-dimensional column vector consisting of elements $\x^{\a},\a\in\mathscr{A}$ (fix any ordering on $\N^n$). For a positive integer $r$, the set of $r\times r$ symmetric matrices is denoted by $\mathbf{S}^r$ and the set of $r\times r$ positive semidefinite (PSD) matrices is denoted by $\mathbf{S}_+^r$. Let us denote by $\langle A, B\rangle\in\R$ the trace inner-product, defined by $\langle A, B\rangle=\Tr(A^TB)$.

Given a polynomial $f(\x)\in\R[\x]$, if there exist polynomials $f_1(\x),\ldots,f_t(\x)$ such that
\begin{equation}\label{sec2-eq1}
f(\x)=\sum_{i=1}^tf_i(\x)^2,
\end{equation}
then we say that $f(\x)$ is a {\em sum of squares} (SOS) polynomial. Clearly, the existence of an SOS decomposition of a given polynomial provides a certificate for its global nonnegativity. For $d\in\N$, let $\N^n_d:=\{\a=(\alpha_i)\in\N^n\mid\sum_{i=1}^n\alpha_i\le d\}$. Assume that $f\in\R[\x]$ is a polynomial of degree $2d$. If we choose the standard monomial basis $\x^{\N^n_{d}}$, then the SOS condition \eqref{sec2-eq1} is equivalent to the existence of a PSD matrix $Q$, which is called a {\em Gram matrix} \cite{re2}, such that
\begin{equation}\label{sec2-eq2}
f(\x)=(\x^{\N^n_{d}})^TQ\x^{\N^n_{d}}.
\end{equation}
When $f$ is sparse, the size of the corresponding SDP problem (\ref{sec2-eq2}) can be reduced by computing a smaller monomial basis. In fact, the set $\N^n_{d}$ in (\ref{sec2-eq2}) can be replaced by the integer points in half of the Newton polytope of $f$, i.e., by
\begin{equation}\label{sec2-eq3}
\B=\frac{1}{2}\cdot\New(f)\cap\N^n\subseteq\N^n_{d}.
\end{equation}
See \cite{re} for a proof.
We refer to this as the \emph{Newton polytope method}.
For convenience, we abuse notation in the sequel and denote by $\B$ instead of $\x^{\B}$ a monomial basis.

For a polynomial $f(\x)=\sum_{\a\in\A}f_{\a}\x^{\a}$ with $\supp(f)=\A$, let $\mathscr{B}$ be a monomial basis. For any $\a\in\mathscr{B}+\mathscr{B}:=\{\b+\g\mid\b,\g\in\B\}$, associate it with a $(0,1)$-binary matrix $A_{\a}\in\mathbf{S}^{|\B|}$ such that $[A_{\a}]_{\b\g}=1$ iff $\b+\g=\a$ for all $\b,\g\in\B$. Then $f$ is an SOS polynomial iff there exists $Q\in\mathbf{S}_+^{|\B|}$ such that the following coefficient matching condition holds:
\begin{equation}\label{sec2-eq4}
\langle A_{\a}, Q\rangle=f_{\a} \textrm{ for all }\a\in\mathscr{B}+\mathscr{B},
\end{equation}
where we set $f_{\a}=0$ if $\a\notin\A$.

\subsection{Moment-SOS relaxations for POPs}
With $\y=(y_{\a})$ being a sequence indexed by the standard monomial basis $\{\x^{\a}\}$ of $\R[\x]$, let $L_{\y}:\R[\x]\rightarrow\R$ be the linear functional
\begin{equation*}
f=\sum_{\a}f_{\a}\x^{\a}\mapsto L_{\y}(f)=\sum_{\a}f_{\a}y_{\a}.
\end{equation*}
Given a monomial basis $\B$, the {\em moment} matrix $M_{\B}(\y)$  associated with $\B$ and $\y$ is the matrix with rows and columns indexed by $\B$ such that
\begin{equation*}
M_{\B}(\y)_{\b\g}:=L_{\y}(\x^{\b}\x^{\g})=y_{\b+\g}, \quad\forall\b,\g\in\B.
\end{equation*}
If $\B$ is the standard monomial basis $\N^n_{d}$, we also denote $M_{\B}(\y)$ by $M_{d}(\y)$.

Consider the unconstrained polynomial optimization problem:
\begin{equation}\label{upop}
(\textrm{P}_0):\quad\lambda^*:=\inf_{\x}\{f(\x):\x\in\R^n\}
\end{equation}
with $f(\x)\in\R[\x]$ of degree $2d$. Let $\B$ be a monomial basis. The moment SDP relaxation of $(\textrm{P}_0)$ is (\cite{las1})
\begin{equation}\label{sec4-eq3}
(\textrm{P}):\quad
\begin{array}{rl}
\lambda_{sos}:=\inf&\{\,L_{\y}(f)\,:\\
\textrm{s.t.}\quad &M_{\B}(\y)\succeq0,\\
&y_{\mathbf{0}}=1\,\}\,.
\end{array}
\end{equation}
The dual SDP problem of~\eqref{sec4-eq3} is
\begin{equation}\label{sec2-usos}
(\textrm{P})^*:\quad
\begin{array}{rl}
\sup&\{\,\lambda\,:\\
\textrm{s.t.}\quad &\langle Q,A_{\a}\rangle+\lambda\delta_{\mathbf{0}\a}=f_{\a},\quad\forall\a\in\N^n_{2d},\\
&Q\succeq0\,\}\,.
\end{array}
\end{equation}

Suppose $g=\sum_{\a}g_{\a}\x^{\a}\in\R[\x]$ and let $\y=(y_{\a})$ be given. For a positive integer $d$, the {\em localizing} matrix $M_{d}(g\y)$ associated with $g$ and $\y$ is the matrix with rows and columns indexed by $\N^n_{d}$ such that
\begin{equation*}
M_{d}(g\y)_{\b\g}:=L_{\y}(g\x^{\b}\x^{\g})=\sum_{\a}g_{\a}y_{\a+\b+\g}, \quad\forall\b,\g\in\N^n_{d}.
\end{equation*}

Consider the constrained polynomial optimization problem:
\begin{equation}\label{sec2-eq9}
(\textrm{Q}_0):\quad\lambda^*:=\inf_{\x}\{f(\x) : \x\in\mathbf{K}\} \,,
\end{equation}
where $f(\x)\in\R[\x]$ is a polynomial and $\mathbf{K}\subseteq\R^{n}$ is the basic semialgebraic set
\begin{equation}\label{sec2-eq10}
\mathbf{K} = \{\x\in\R^{n} : g_j(\x)\ge 0, j = 1,\ldots,m\},
\end{equation}
for some polynomials $g_j(\x)\in\R[\x], j = 1,\ldots,m.$

Let $d_j:=\lceil\deg(g_j)/2\rceil,j=1,\ldots,m$ and let $\hat{d}\ge\max\{\lceil\deg(f)/2\rceil,d_1,\ldots,d_m\}$ be a positive integer. Then the Lasserre's hierarchy indexed by $\hat{d}$ of primal moment SDP relaxations of ($\textrm{Q}_0$) is defined by (\cite{las1}):
\begin{equation}\label{sec2-eq11}
(\textrm{Q}_{\hat{d}}):\quad
\begin{array}{rl}
\lambda_{\hat{d}}:=\inf &\{\,L_{\y}(f)\,:\\
\textrm{s.t.}&M_{\hat{d}}(\y)\succeq0,\\
&M_{\hat{d}-d_j}(g_j\y)\succeq0,\quad j=1,\ldots,m,\\
&y_{\mathbf{0}}=1\,\}\,.
\end{array}
\end{equation}
We call $\hat{d}$ the {\em relaxation order}. 

Set $g_0:=1$ and $d_0:=0$. For each $j$, writing $M_{\hat{d}-d_j}(g_j\y)=\sum_{\a}D_{\a}^jy_{\a}$ for appropriate $(0,1)$-binary matrices $\{D_{\a}^j\}$, we can write the dual of \eqref{sec2-eq11} as
\begin{equation}\label{sec2-eq12}
(\textrm{Q}_{\hat{d}})^*:\quad
\begin{array}{rl}
\sup&\{\,\lambda\,:\\
\textrm{s.t.}&\displaystyle\sum_{j=0}^m\langle Q_j,D_{\a}^j\rangle+\lambda\delta_{\mathbf{0}\a}=f_{\a},\quad\forall\a\in\N^n_{2\hat{d}},\\
&Q_j\succeq0,\quad j=0,\ldots,m\,\}\,,
\end{array}
\end{equation}
where $\delta_{\mathbf{0}\a}$ is the usual Kronecker symbol.

\subsection{Chordal graphs and sparse matrices}
We introduce some basic notions from graph theory. An {\em (undirected) graph} $G(V,E)$ or simply $G$ consists of a set of nodes $V$ and a set of edges $E\subseteq\{\{v_i,v_j\}\mid (v_i,v_j)\in V\times V\}$. Note that we admit self-loops (i.e.\,edges that connect the same node) in the edge set $E$. If $G$ is a graph, we also use $V(G)$ and $E(G)$ to indicate the set of nodes of $G$ and the set of edges of $G$, respectively. For two graphs $G,H$, we say that $G$ is a {\em subgraph} of $H$ if $V(G)\subseteq V(H)$ and $E(G)\subseteq E(H)$, denoted by $G\subseteq H$. For a graph $G(V,E)$, a {\em cycle} of length $k$ is a set of nodes $\{v_1,v_2,\ldots,v_k\}\subseteq V$ with $\{v_k,v_1\}\in E$ and $\{v_i, v_{i+1}\}\in E$, for $i=1,\ldots,k-1$. A {\em chord} in a cycle $\{v_1,v_2,\ldots,v_k\}$ is an edge $\{v_i, v_j\}$ that joins two nonconsecutive nodes in the cycle.

A graph is called a {\em chordal graph} if all its cycles of length at least four have a chord. Chordal graphs include some common classes of graphs, such as complete graphs, line graphs and trees, and have applications in sparse matrix theory. Note that any non-chordal graph $G(V,E)$ can always be extended to a chordal graph $\overline{G}(V,\overline{E})$ by adding appropriate edges to $E$, which is called a {\em chordal extension} of $G(V,E)$. A {\em clique} $C\subseteq V$ of $G$ is a subset of nodes where $\{v_i,v_j\}\in E$ for any $v_i,v_j\in C$. If a clique $C$ is not a subset of any other clique, then it is called a {\em maximal clique}. It is known that maximal cliques of a chordal graph can be enumerated efficiently in linear time in the number of nodes and edges of the graph. See for example \cite{bp,fg,go} for the details.

Given a graph $G(V,E)$, a symmetric matrix $Q$ with row and column indices labeled by $V$ is said to have sparsity pattern $G$ if $Q_{\b\g}=Q_{\g\b}=0$ whenever $\{\b,\g\}\notin E$. Let $\mathbf{S}_G$ be the set of symmetric matrices with sparsity pattern $G$. The PSD matrices with sparsity pattern $G$ form a convex cone
\begin{equation}\label{sec2-eq5}
\mathbf{S}_+^{|V|}\cap\mathbf{S}_G=\{Q\in\mathbf{S}_G\mid Q\succeq0\}.
\end{equation}

Given a maximal clique $C$ of $G(V,E)$, we define a matrix $P_{C}\in \R^{|C|\times |V|}$ as
\begin{equation}\label{sec2-eq6}
(P_{C})_{i\b}=\begin{cases}
1, &\textrm{if }C(i)=\b,\\
0, &\textrm{otherwise}.
\end{cases}
\end{equation}
where $C(i)$ denotes the $i$-th node in $C$, sorted in the ordering compatibly with $V$. Note that $Q_{C}=P_{C}QP_{C}^T\in \mathbf{S}^{|C|}$ extracts a principal submatrix $Q_C$ defined by the indices in the clique $C$ from a symmetry matrix $Q$, and $Q=P_{C}^TQ_{C}P_{C}$ inflates a $|C|\times|C|$ matrix $Q_{C}$ into a sparse $|V|\times |V|$ matrix $Q$.

When the sparsity pattern graph $G$ is chordal, the cone $\mathbf{S}_+^{|V|}\cap\mathbf{S}_G$ can be
decomposed as a sum of simple convex cones, as stated in the following theorem.
\begin{theorem}[\cite{va}, Theorem 9.2]\label{sec2-thm}
Let $G(V,E)$ be a chordal graph and assume that $C_1,\ldots,C_t$ are all of the maximal cliques of $G(V,E)$. Then a matrix $Q\in\mathbf{S}_+^{|V|}\cap\mathbf{S}_G$ if and only if there exist $Q_{k}\in \mathbf{S}_+^{|C_k|}$ for $k=1,\ldots,t$ such that $Q=\sum_{k=1}^tP_{C_k}^TQ_{k}P_{C_k}$.
\end{theorem}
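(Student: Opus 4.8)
The plan is to prove the two implications separately, the easy one first. For the ``if'' direction, suppose $Q = \sum_{k=1}^t P_{C_k}^T Q_k P_{C_k}$ with each $Q_k \in \mathbf{S}_+^{|C_k|}$. Each summand $P_{C_k}^T Q_k P_{C_k}$ is positive semidefinite (it is a congruence of a PSD matrix by $P_{C_k}^T$, since $v^T P_{C_k}^T Q_k P_{C_k} v = (P_{C_k}v)^T Q_k (P_{C_k} v) \ge 0$), so $Q \succeq 0$ as a sum of PSD matrices. Moreover each $P_{C_k}^T Q_k P_{C_k}$ has support contained in $C_k \times C_k \subseteq E$ (because $C_k$ is a clique and we allow self-loops), hence $Q \in \mathbf{S}_G$. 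This direction needs only the definitions and requires no chordality.

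The substantive direction is ``only if'': given $Q \in \mathbf{S}_+^{|V|} \cap \mathbf{S}_G$ with $G$ chordal, we must produce the $Q_k$. The standard approach is by induction on $|V|$ using a \emph{perfect elimination ordering}, which exists precisely because $G$ is chordal: order the nodes $v_1, \dots, v_n$ so that for each $v_i$, its later neighbors $\mathrm{adj}^+(v_i) = \{v_j : j > i, \{v_i,v_j\}\in E\}$ together with $v_i$ form a clique. Peeling off $v_1$: the set $\{v_1\} \cup \mathrm{adj}^+(v_1)$ is a clique, hence contained in some maximal clique $C_\ell$. Because $Q$ has sparsity pattern $G$, the row/column of $v_1$ is supported on $\{v_1\}\cup\mathrm{adj}^+(v_1) \subseteq C_\ell$. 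One then uses a Schur-complement argument: the principal submatrix $Q_{C_\ell}$ is PSD (submatrix of a PSD matrix), and one can write $Q = P_{C_\ell}^T R\, P_{C_\ell} + Q'$, where $R$ captures the part of $Q$ involving $v_1$ (a rank-controlled PSD block localized in $C_\ell$, obtained from the Schur complement so that $R \succeq 0$) and $Q'$ is PSD, supported on the subgraph $G' = G \setminus \{v_1\}$, which is again chordal. Applying the induction hypothesis to $Q'$ on $G'$ and noting that every maximal clique of $G'$ is contained in a maximal clique of $G$, one assembles the decomposition. The base case $|V| = 1$ is trivial.

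The main obstacle is the inductive step: one must carefully arrange the splitting $Q = P_{C_\ell}^T R\, P_{C_\ell} + Q'$ so that \emph{both} pieces remain positive semidefinite while $Q'$ genuinely loses the node $v_1$ from its support. The Schur complement is the right tool — writing $Q$ in block form with the $v_1$-block first, $Q = \begin{pmatrix} a & b^T \\ b & C \end{pmatrix}$ with $b$ supported inside $C_\ell \setminus \{v_1\}$, take $R$ to be the block $\begin{pmatrix} a & b^T \\ b & b b^T/a \end{pmatrix}$ (handling the degenerate case $a = 0$, which forces $b = 0$ by PSD-ness, separately) so that $R \succeq 0$ of rank one in the relevant coordinates, and $Q' = \begin{pmatrix} 0 & 0 \\ 0 & C - b b^T / a \end{pmatrix}$, whose bottom block is exactly the Schur complement, hence PSD, and whose sparsity pattern is that of $G'$. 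Verifying that $C - bb^T/a$ inherits the chordal sparsity pattern of $G \setminus \{v_1\}$ uses precisely that $\mathrm{adj}^+(v_1)$ is a clique: the rank-one correction $bb^T/a$ is supported on $\mathrm{adj}^+(v_1) \times \mathrm{adj}^+(v_1)$, all of whose entries are already edges of $G'$, so no new nonzero pattern is created outside $E(G')$. Packaging this cleanly is where the real work lies; everything else is bookkeeping with the $P_{C_k}$ matrices and the fact (cited in the excerpt) that maximal cliques of a chordal graph behave well under vertex deletion.
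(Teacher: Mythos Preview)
The paper does not actually prove this theorem: it is stated as Theorem~9.2 of the reference \cite{va} and simply invoked as a known result, with no proof environment following the statement. So there is no ``paper's own proof'' to compare against.

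That said, your argument is the standard one and is essentially the proof given in \cite{va}. The ``if'' direction is correct as written. For the ``only if'' direction, the perfect-elimination-ordering plus Schur-complement peeling is exactly right, and you have correctly identified the crux: the rank-one correction $bb^T/a$ is supported on $\mathrm{adj}^+(v_1)\times\mathrm{adj}^+(v_1)$, which consists entirely of edges of $G$ precisely because $\mathrm{adj}^+(v_1)$ is a clique under a perfect elimination ordering, so the Schur complement stays inside $\mathbf{S}_{G'}$. Two minor points worth tightening: (i) when you write $Q = P_{C_\ell}^T R\, P_{C_\ell} + Q'$, the matrix $R$ you display is $n\times n$, not $|C_\ell|\times|C_\ell|$; you should either say that the $n\times n$ rank-one block is supported on $C_\ell$ and hence equals $P_{C_\ell}^T \hat R\, P_{C_\ell}$ for some $\hat R\in\mathbf{S}_+^{|C_\ell|}$, or define $R$ directly on the clique coordinates; (ii) at the end of the induction you obtain a decomposition over the maximal cliques of $G'$, and you need one more line observing that each such clique sits inside a maximal clique of $G$, so the corresponding summand can be zero-padded and absorbed into the $G$-clique term. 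Both are routine, and the overall strategy is sound.
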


Given a graph $G(V,E)$, let $\Pi_{G}$ be the projection from $\mathbf{S}^{|V|}$ to the subspace $\mathbf{S}_G$, i.e., for $Q\in\mathbf{S}^{|V|}$,
\begin{equation}\label{sec2-eq7}
\Pi_{G}(Q)_{\b\g}=\begin{cases}
Q_{\b\g}, &\textrm{if }\{\b,\g\}\in E,\\
0, &\textrm{otherwise}.
\end{cases}
\end{equation}

We denote by $\Pi_{G}(\mathbf{S}_+^{|V|})$ the set of matrices in $\mathbf{S}_G$ that have a PSD completion, i.e., 
\begin{equation}\label{sec2-eq8}
\Pi_{G}(\mathbf{S}_+^{|V|})=\{\Pi_{G}(Q)\mid Q\in\mathbf{S}_+^{|V|}\}.
\end{equation}

One can check that the PSD completable cone $\Pi_{G}(\mathbf{S}_+^{|V|})$ and the PSD cone $\mathbf{S}_+^{|V|}\cap\mathbf{S}_G$ form a pair of dual cones in $\mathbf{S}_G$. Moreover, for a chordal graph $G$, the decomposition result for the cone $\mathbf{S}_+^{|V|}\cap\mathbf{S}_G$ in Theorem \ref{sec2-thm} leads to the following characterization of the PSD completable cone $\Pi_{G}(\mathbf{S}_+^{|V|})$.
\begin{theorem}[\cite{va}, Theorem 10.1]\label{sec2-thm2}
Let $G(V,E)$ be a chordal graph and assume that $C_1,\ldots,C_t$ are all of the maximal cliques of $G(V,E)$. Then a matrix $Q\in\Pi_{G}(\mathbf{S}_+^{|V|})$ if and only if $Q_{k}=P_{C_k}QP_{C_k}^T\succeq0$ for $k=1,\ldots,t$.
\end{theorem}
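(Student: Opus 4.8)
The statement to prove is Theorem \ref{sec2-thm2}, which characterizes the PSD-completable cone $\Pi_G(\mathbf{S}_+^{|V|})$ for a chordal graph $G$ in terms of positive semidefiniteness of all maximal-clique submatrices. The plan is to derive it from Theorem \ref{sec2-thm} by a duality argument, since the excerpt has already noted that $\Pi_G(\mathbf{S}_+^{|V|})$ and $\mathbf{S}_+^{|V|}\cap\mathbf{S}_G$ form a pair of dual cones inside the ambient space $\mathbf{S}_G$ equipped with the trace inner product $\langle\cdot,\cdot\rangle$. Concretely, I would first record the two facts: (a) $\left(\mathbf{S}_+^{|V|}\cap\mathbf{S}_G\right)^* = \Pi_G(\mathbf{S}_+^{|V|})$ within $\mathbf{S}_G$, and (b) by Theorem \ref{sec2-thm}, $\mathbf{S}_+^{|V|}\cap\mathbf{S}_G = \sum_{k=1}^t P_{C_k}^T\mathbf{S}_+^{|C_k|}P_{C_k}$.

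The core of the argument is then a standard fact about dual cones: the dual of a sum of cones is the intersection of their duals, i.e. $\left(\sum_k K_k\right)^* = \bigcap_k K_k^*$. Applying this to (b), I get
\begin{equation*}
\Pi_G(\mathbf{S}_+^{|V|}) = \bigcap_{k=1}^t \left(P_{C_k}^T\mathbf{S}_+^{|C_k|}P_{C_k}\right)^*,
\end{equation*}
where the duals are taken in $\mathbf{S}_G$. So it remains to identify the dual of a single ``inflated'' clique cone. Here one computes: for $Q\in\mathbf{S}_G$, the condition $\langle Q, P_{C_k}^T R\, P_{C_k}\rangle\ge 0$ for all $R\in\mathbf{S}_+^{|C_k|}$ is equivalent, via $\langle Q, P_{C_k}^T R\, P_{C_k}\rangle = \langle P_{C_k} Q P_{C_k}^T, R\rangle = \langle Q_k, R\rangle$, to $Q_k = P_{C_k}QP_{C_k}^T\succeq 0$ (using that $\mathbf{S}_+^{|C_k|}$ is self-dual). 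Intersecting over $k$ yields exactly the claimed characterization. I would be slightly careful to note that all dualities are taken relative to $\mathbf{S}_G$ rather than $\mathbf{S}^{|V|}$, which is harmless because $P_{C_k}^T\mathbf{S}_+^{|C_k|}P_{C_k}\subseteq\mathbf{S}_G$ (each such inflated matrix is supported on edges of the clique $C_k$, hence on edges of $G$), so the sum in (b) already lives in $\mathbf{S}_G$ and its dual within $\mathbf{S}_G$ is well defined.

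An alternative, more self-contained route avoids invoking the dual-cone calculus and instead proves both implications directly. The ``only if'' direction is immediate: if $Q = \Pi_G(\tilde Q)$ with $\tilde Q\succeq 0$, then since $Q$ and $\tilde Q$ agree on all entries indexed by edges of $G$, and the clique $C_k$ induces a complete subgraph, we have $P_{C_k}QP_{C_k}^T = P_{C_k}\tilde QP_{C_k}^T\succeq 0$ as a principal submatrix of a PSD matrix. The ``if'' direction is the substantive one and is where I expect the main obstacle to lie: given that every $Q_k=P_{C_k}QP_{C_k}^T\succeq 0$, one must actually construct a PSD completion $\tilde Q$ of $Q$. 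The classical way to do this (Grone–Johnson–Sá–Wolkowicz) is to order the maximal cliques of the chordal graph so they satisfy the running intersection property (a clique tree / perfect elimination ordering exists precisely because $G$ is chordal), and then fill in the unspecified entries one clique at a time, at each step using a Schur-complement argument to extend the current partial PSD matrix to include the new clique's variables while preserving positive semidefiniteness — the running intersection property guarantees that the overlap with previously processed cliques is itself a clique, so the relevant submatrix is already determined and PSD. Since this completion theorem is exactly the content being cited from \cite{va}, I would in practice take the short duality path and simply cite Theorem \ref{sec2-thm} together with the self-duality of the PSD cone, relegating the completion construction to the reference.
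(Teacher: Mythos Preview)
The paper does not provide its own proof of Theorem~\ref{sec2-thm2}; it is simply quoted from \cite{va} (Theorem~10.1) and then used as a black box throughout. So there is nothing in the paper to compare your proposal against.

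That said, your proposal is correct. Your primary route---dualizing the sum decomposition of Theorem~\ref{sec2-thm} via $(\sum_k K_k)^* = \bigcap_k K_k^*$ and the self-duality of $\mathbf{S}_+^{|C_k|}$---is exactly the standard way the two theorems are tied together in \cite{va}, and your trace computation $\langle Q, P_{C_k}^T R P_{C_k}\rangle = \langle P_{C_k} Q P_{C_k}^T, R\rangle$ is the right identity. The one point you should make explicit if you write this up carefully is why $(\mathbf{S}_+^{|V|}\cap\mathbf{S}_G)^* = \Pi_G(\mathbf{S}_+^{|V|})$ rather than merely its closure: the paper asserts this duality without proof (``One can check''), but it relies on $\Pi_G(\mathbf{S}_+^{|V|})$ being closed, which in turn is usually established either from Theorem~\ref{sec2-thm} itself (a finite sum of inflated PSD cones sharing a common interior point is closed) or from the direct completion argument. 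Your alternative route via the Grone--Johnson--S\'a--Wolkowicz completion along a clique tree is precisely the original proof and sidesteps this issue entirely. Either path is fine; just be aware that in the duality version the closedness is not entirely free.
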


For more details about sparse matrices and chordal graphs, the reader may refer to \cite{va}.

\section{\newtssos{The chordal-TSSOS Hierarchy: Unconstrained case}}\label{sec3}
In this section, we describe an iterative procedure to exploit term-sparsity for the primal \eqref{sec4-eq3} and dual \eqref{sec2-usos} SDP relaxations of unconstrained POPs.

For a polynomial $f(\x)=\sum_{\a\in\A}f_{\a}\x^{\a}\in\R[\x]$ with $\supp(f)=\A$ (assuming $\mathbf{0}\in\A$), let $\mathscr{B}$ be a monomial basis.
In the following, we will consider graphs with $V:=\mathscr{B}$ as set of nodes. Suppose $G(V,E)$ is such a graph. We define the {\em support} of $G$ by
$$\supp(G):=\{\b+\g\mid\{\b,\g\}\in E\}.$$
We further define two operations on $G$: {\em support-extension} and {\em chordal-extension}. The support-extension of $G$, denoted by $\SE(G)$, is the graph with set of nodes $\mathscr{B}$ and with the edge set
$$E(\SE(G)):=\{\{\b,\g\}\mid\b+\g\in\supp(G)\}.$$
\begin{example}
Consider the following graph $G(V,E)$ with $$V=\{1,x_1,x_2,x_3,x_2x_3,x_1x_3,x_1x_2\} \textrm{ and } E=\{\{1,x_2x_3\},\{x_2,x_1x_3\}\}.$$
Then $E(\SE(G))=\{\{1,x_2x_3\},\{x_2,x_1x_3\},\{x_2,x_3\},\{x_1,x_2x_3\},\{x_3,x_1x_2\}\}$. See Figure \ref{support} for the support-extension $\SE(G)$ of $G$.
\begin{figure}[htbp]
\caption{The support-extension $\SE(G)$ of $G$}\label{support}
\begin{center}
{\tiny
\begin{tikzpicture}[every node/.style={circle, draw=blue!50, thick, minimum size=8mm}]
\node (n1) at (0,0) {$1$};
\node (n2) at (2,0) {$x_1$};
\node (n3) at (4,0) {$x_2$};
\node (n4) at (6,0) {$x_3$};
\node (n5) at (1,-2) {$x_2x_3$};
\node (n6) at (4,-2) {$x_1x_3$};
\node (n7) at (6,-2) {$x_1x_2$};
\draw (n1)--(n5);
\draw (n3)--(n6);
\draw[dashed] (n2)--(n5);
\draw[dashed] (n3)--(n4);
\draw[dashed] (n4)--(n7);
\end{tikzpicture}}\\
{\small The dashed edges are added in the process of the support-extension.}
\end{center}
\end{figure}
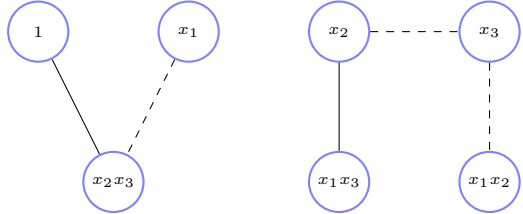 
\end{example}

\newtssos{Any specific chordal-extension of $G$ is denoted by $\overline{G}$.}
\begin{example}
Consider the following graph $G(V,E)$ with $V=\{x_1,x_2,x_3,x_4,x_5,x_6\}$ and $E=\{\{x_1,x_2\},\{x_2,x_3\},\{x_3,x_4\},\{x_4,x_5\},\{x_5,x_6\},\{x_6,x_1\}\}.$
See Figure \ref{chordal} for the chordal-extension $\overline{G}$ of $G$.
\begin{figure}[htbp]
\caption{The chordal-extension $\overline{G}$ of $G$}\label{chordal}
\begin{center}
{\tiny
\begin{tikzpicture}[every node/.style={circle, draw=blue!50, thick, minimum size=7.5mm}]
\node (n2) at (90:2) {$x_1$};
\node (n3) at (30:2) {$x_2$};
\node (n4) at (330:2) {$x_3$};
\node (n5) at (270:2) {$x_4$};
\node (n6) at (210:2) {$x_5$};
\node (n1) at (150:2) {$x_6$};
\draw (n2)--(n3);
\draw[dashed] (n2)--(n4);
\draw[dashed] (n2)--(n5);
\draw[dashed] (n2)--(n6);
\draw (n3)--(n4);
\draw (n4)--(n5);
\draw (n5)--(n6);
\draw (n6)--(n1);
\draw (n1)--(n2);
\end{tikzpicture}}\\
{\small The dashed edges are added in the process of the chordal-extension.}
\end{center}
\end{figure}
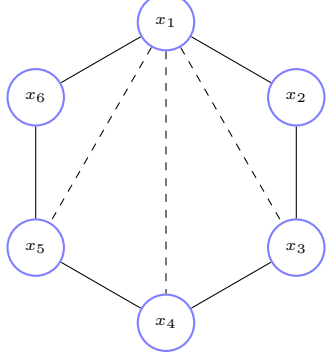 
\end{example}

\begin{remark}
For a graph $G(V,E)$, the chordal extension of $G$ is usually not unique. A chordal extension with the least number of edges is called a {\em minimum chordal extension}. Finding a minimum chordal extension of a graph is an NP-hard problem in general. Fortunately, several heuristic algorithms, such as the minimum degree ordering, are known to efficiently produce a good approximation \cite{am,be,heg}. In this paper, we actually use an approximately minimum chordal extension for a graph.
\end{remark}

In the following, we assume that for graphs $G,H$ with same set of nodes, if $E(G)\subseteq E(H)$, then $E(\overline{G})\subseteq E(\overline{H})$. This assumption is reasonable since any chordal extension of $H$ must be also a chordal extension of $G$.

Let $f(\x)\in\R[\x]$ with $\supp(f)=\A$ and $\B$ a monomial basis with $r=|\B|$. We define $G_0(V,E_0)$ to be the graph with $V=\B$ and
$$E_0=\{\{\b,\g\}\mid\b+\g\in\A\cup(2\B)\},$$
where $2\B=\{2\b\mid\b\in\B\}$. We call $G_0$ the {\em term-sparsity pattern (tsp) graph} associated with $f$.

For $k\ge1$, we recursively define a sequence of graphs $\{G_k(V,E_k)\}_{k\ge1}$ by
\begin{equation}\label{sec3-graph}
G_k:=\overline{\SE(G_{k-1})}.
\end{equation}

If $f$ is sparse, by replacing $M_{\B}(\y)\succeq0$ with the weaker condition $M_{\B}(\y)\in\Pi_{G_k}(\mathbf{S}_+^{r})$ in \eqref{sec4-eq3}, we obtain a sparse moment SDP relaxation of $(\textrm{P}_0)$ \eqref{upop} for each $k\ge1$:
\begin{equation}\label{sec4-ukmom}
(\textrm{P}^k):\quad
\begin{array}{rl}
\lambda_k:=\inf &\{\,L_{\y}(f)\,:\\
\textrm{s.t.}&M_{\B}(\y)\in\Pi_{G_k}(\mathbf{S}^{r}_+),\\
&y_{\mathbf{0}}=1\,\}\,.
\end{array}
\end{equation}
We call $k$ the {\em sparse order}. By construction, we have $G_{k}\subseteq G_{k+1}$ for all $k\ge1$ and therefore the sequence of graphs $\{G_k(V,E_k)\}_{k\ge1}$ stabilizes after a finite number of steps. 

\begin{theorem}
The sequence $\{\lambda_k\}_{k\ge 1}$ is monotone nondecreasing and $\lambda_k\le\lambda_{sos}$ for all $k$.
\end{theorem}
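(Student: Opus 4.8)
The plan is to establish the two claims separately, both relying on the inclusion structure of the feasible sets of the SDPs $(\textrm{P}^k)$ and $(\textrm{P})$. The key observation is that all of these are minimization problems over nested feasible sets, so the optimal values are ordered in the reverse direction of the inclusions. Concretely, a larger feasible set yields a smaller (or equal) infimum.

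First I would show monotonicity of $\{\lambda_k\}$. By construction $G_k \subseteq G_{k+1}$ for all $k \ge 1$: indeed, starting from $E(G_k) \subseteq E(\SE(G_k))$ (every edge $\{\b,\g\}$ of $G_k$ satisfies $\b+\g \in \supp(G_k)$, hence survives the support-extension), the stated monotonicity assumption on chordal extensions gives $E(\overline{\SE(G_k)}) \supseteq E(G_k)$, and one checks $E(G_k) \subseteq E(\overline{G_k})$ as well so that $G_{k+1} = \overline{\SE(G_k)} \supseteq \overline{G_k} \supseteq \SE(G_{k-1}) = $ the precursor of $G_k$; the cleanest route is simply $E_k \subseteq E(\SE(G_k)) \subseteq E(\overline{\SE(G_k)}) = E_{k+1}$. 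Now if $\{\b,\g\} \notin E(G_k)$ then a fortiori $\{\b,\g\} \notin E(G_{k+1})$ is false — rather, $E_k \subseteq E_{k+1}$ means the sparsity constraint for $G_{k+1}$ forces \emph{fewer} entries to vanish, so $\mathbf{S}_{G_k} \subseteq \mathbf{S}_{G_{k+1}}$ and consequently $\Pi_{G_k}(\mathbf{S}_+^r) \subseteq \Pi_{G_{k+1}}(\mathbf{S}_+^r)$: any matrix of the form $\Pi_{G_k}(Q)$ with $Q \succeq 0$ can be written as $\Pi_{G_{k+1}}(Q')$ for a suitable $Q' \succeq 0$ (take $Q' = \Pi_{G_k}(Q)$, viewed inside $\mathbf{S}_{G_{k+1}}$, whose PSD completion exists because $Q$ itself is such a completion). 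Hence the feasible set of $(\textrm{P}^k)$ is contained in that of $(\textrm{P}^{k+1})$, and minimizing the same linear objective $L_{\y}(f)$ over a larger set gives $\lambda_{k+1} \le \lambda_k$.

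Wait — the claim is that $\{\lambda_k\}$ is nondecreasing, so I must have the inclusion the other way. Let me recheck: a matrix $M_{\B}(\y) \in \Pi_{G_k}(\mathbf{S}_+^r)$ means $M_{\B}(\y)$ agrees on the edges of $G_k$ with some PSD matrix, but off the edges of $G_k$ (where it is a genuine moment matrix, so those entries $y_{\b+\g}$ are still present as variables) there is no constraint. As $G_k$ grows to $G_{k+1}$, more pairs $\{\b,\g\}$ become edges, so the PSD-completion constraint is imposed on \emph{more} entries; this \emph{shrinks} the feasible set, giving $\lambda_{k+1} \ge \lambda_k$. So the correct statement is: the feasible set of $(\textrm{P}^{k+1})$ is contained in that of $(\textrm{P}^k)$ because $\Pi_{G_{k+1}}(\mathbf{S}_+^r)$, pulled back to the full moment matrix, is a more restrictive condition; formally if $M_{\B}(\y)\in\Pi_{G_{k+1}}(\mathbf{S}_+^r)$ then its restriction to $\mathbf{S}_{G_k}$ lies in $\Pi_{G_k}(\mathbf{S}_+^r)$, using that $G_k\subseteq G_{k+1}$ and that projecting a PSD-completable pattern onto a subpattern stays PSD-completable. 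Minimizing over the smaller set gives $\lambda_{k+1}\ge\lambda_k$, which is the asserted monotonicity. For the upper bound $\lambda_k \le \lambda_{sos}$: the full PSD cone satisfies $\mathbf{S}_+^r \subseteq \Pi_{G_k}(\mathbf{S}_+^r)$ for every graph $G_k$ (a PSD matrix is trivially PSD-completable, with itself as completion), so every $\y$ feasible for $(\textrm{P})$ is feasible for $(\textrm{P}^k)$; hence $\lambda_k \le \lambda_{sos}$.

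The main obstacle is purely bookkeeping: making precise the claim "$M_{\B}(\y) \in \Pi_{G_{k+1}}(\mathbf{S}_+^r)$ implies $M_{\B}(\y) \in \Pi_{G_k}(\mathbf{S}_+^r)$" when $G_k \subseteq G_{k+1}$. This is immediate from the definition \eqref{sec2-eq8}: if $\Pi_{G_{k+1}}(Q) = M_{\B}(\y)$ on all entries (where the left side zeroes out non-edges of $G_{k+1}$ and $M_{\B}(\y)$ genuinely has those $y$-entries, so in fact we need $M_{\B}(\y)$ to already be supported on $G_{k+1}$ — but $M_{\B}(\y)$ as a moment matrix is \emph{full}), one must be slightly careful that the relaxation \eqref{sec4-ukmom} really reads "$M_{\B}(\y)$, projected to $\mathbf{S}_{G_k}$, lies in $\Pi_{G_k}(\mathbf{S}_+^r)$", i.e.\ the off-pattern entries of the moment matrix are unconstrained. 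Granting that reading, the same PSD matrix $Q$ witnessing completability for the finer pattern $G_{k+1}$ also witnesses it for $G_k$, since $\Pi_{G_k}(Q)$ is just a further projection and $Q$ is still its completion. Everything else — the inclusion $\mathbf{S}_+^r \subseteq \Pi_{G}(\mathbf{S}_+^r)$ and the "smaller feasible set, larger infimum" principle — is routine.
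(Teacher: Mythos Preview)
Your argument is correct once you settle on the right direction (the second pass), though the exposition is unnecessarily meandering and the inclusion $\mathbf{S}_+^r \subseteq \Pi_{G_k}(\mathbf{S}_+^r)$ is literally false as written (the right-hand side sits inside $\mathbf{S}_{G_k}$); what you mean, and what suffices, is that $M\succeq0$ implies $\Pi_{G_k}(M)\in\Pi_{G_k}(\mathbf{S}_+^r)$, with $M$ itself as completion.

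The route differs from the paper's. The paper argues via Theorem~\ref{sec2-thm2}: since $G_k\subseteq G_{k+1}$, every maximal clique of $G_k$ sits inside some maximal clique of $G_{k+1}$; hence, by the clique characterisation of PSD-completability for chordal graphs, membership in $\Pi_{G_{k+1}}(\mathbf{S}_+^r)$ forces membership in $\Pi_{G_k}(\mathbf{S}_+^r)$. You instead work straight from the definition \eqref{sec2-eq8}: the same PSD completion $Q$ for the finer pattern $G_{k+1}$ serves as a completion for the coarser pattern $G_k$. Your approach is more elementary and does not even require $G_k$ to be chordal, so it would apply verbatim to any monotone family of sparsity graphs; the paper's approach, by contrast, ties the argument to the chordal structure that is central to the rest of the framework (and to how the SDPs are actually solved, via the clique decomposition of Theorem~\ref{sec2-thm}).
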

\begin{proof}
By construction, each maximal clique of $G_k$ is a subset of some maximal clique of $G_{k+1}$. Thus by Theorem \ref{sec2-thm2}, we have that $(\textrm{P}^k)$ is a relaxation of $(\textrm{P}^{k+1})$ (and also a relaxation of (P)). This yields the desired conclusions.
\end{proof}

As a consequence, we have the following hierarchy of lower bounds for the optimum of the original problem ($\textrm{P}_0$):
\begin{equation}\label{cliquehier}
\lambda^*\ge\lambda_{sos}\ge\cdots\ge\lambda_2\ge\lambda_1.
\end{equation}
\newtssos{We say that \eqref{sec4-ukmom} (and its associated sequence \eqref{cliquehier})} is the {\em \newtssos{chordal}-TSSOS} hierarchy for ($\textrm{P}_0$).

Unlike for the block moment-SOS hierarchy developed in \cite{wang2} (\newtssos{which will be referred to as the block-TSSOS hierarchy in this paper}), \newtssos{in theory there is no guarantee that the hierarchy of lower bounds  $\{\lambda_k\}_{k\ge1}$ converges to the value $\lambda_{sos}$}. The following is an example.
\begin{example}\label{ex1}
Consider the polynomial $f=x_1^2-2x_1x_2+3x_2^2-2x_1^2x_2+2x_1^2x_2^2-2x_2x_3+6x_3^2+18x_2^2x_3-54x_2x_3^2+142x_2^2x_3^2$ (\cite{ncsparse}). A monomial basis computed by the Newton polytope method is $\{1,x_1,x_2,x_3,x_1x_2,x_2x_3\}$.
Figure \ref{tsp} shows the tsp graph $G_0$ (without dashed edges) and its chordal-extension $G_1$ (with dashed edges) for $f$. The graph sequence $\{G_k\}_{k\ge1}$ stabilizes at $k=1$. Solving the SDP problem ($\textrm{P}^1$) associated with $G_1$, we obtain $\lambda_1=-0.00355$ while we have $\lambda_{sos}=\lambda^*=0$. 

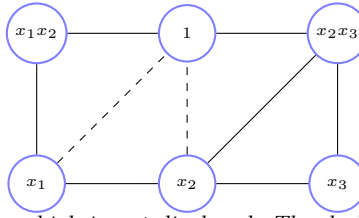
\begin{figure}[htbp]
\caption{The tsp graph $G_0$ and its chordal-extension $G_1$ for Example \ref{ex1}}\label{tsp}
{\tiny
\begin{center}
\begin{tikzpicture}[every node/.style={circle, draw=blue!50, thick, minimum size=7.5mm}]
\node (n2) at (0,0) {$x_1$};
\node (n3) at (2,0) {$x_2$};
\node (n4) at (4,0) {$x_3$};
\node (n5) at (0,2) {$x_1x_2$};
\node (n1) at (2,2) {$1$};
\node (n6) at (4,2) {$x_2x_3$};
\draw (n2)--(n3);
\draw (n3)--(n4);
\draw (n6)--(n4);
\draw (n1)--(n6);
\draw (n1)--(n5);
\draw (n2)--(n5);
\draw (n3)--(n6);
\draw[dashed] (n2)--(n1);
\draw[dashed] (n3)--(n1);
\end{tikzpicture}\\
{\small Each node has a self-loop which is not displayed. The dashed edges are added in the process of the chordal-extension.}
\end{center}}
\end{figure}
\end{example}


\begin{remark}
Even though \newtssos{there is no theoretical guarantee that} the sequence of optimal solutions of the \newtssos{chordal-TSSOS} hierarchy converges to the optimal solution of the corresponding dense moment-SOS relaxation for ($\textrm{P}_0$), in practice \newtssos{finite convergence takes place} in many cases as we shall see in \newtssos{numerical experiments}. Besides, by mixing the \newtssos{chordal-TSSOS} hierarchy with the block-TSSOS hierarchy in \cite{wang2}, we can easily ensure convergence.
\end{remark}

For each $k\ge1$, the dual SDP problem of~\eqref{sec4-ukmom} is
\begin{equation}\label{sec4-uksos}
(\textrm{P}^k)^*:\quad
\begin{cases}
\sup\quad &\lambda\\
\textrm{s.t.}\quad &\langle Q,A_{\a}\rangle+\lambda\delta_{\mathbf{0}\a}=f_{\a},\quad\forall\a\in\supp(G_k),\\
&Q\in\mathbf{S}_+^{r}\cap\mathbf{S}_{G_k},
\end{cases}
\end{equation}
where $A_{\a}$ is defined by \eqref{sec2-eq4}.

\begin{proposition}
For each $k\ge1$, there is no duality gap between ($\textrm{P}^k$) and $(\textrm{P}^k)^*$.
\end{proposition}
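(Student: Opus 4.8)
The plan is to verify Slater's condition for the primal relaxation $(\textrm{P}^k)$ of \eqref{sec4-ukmom} and then invoke the standard strong-duality theorem for semidefinite programs; this yields at once the absence of a duality gap and attainment of the supremum in $(\textrm{P}^k)^*$. This is the sparse analogue of the classical no-duality-gap argument for the dense moment and SOS relaxations. The first step is to unravel the feasibility constraint of $(\textrm{P}^k)$: by \eqref{sec2-eq8} and Theorem \ref{sec2-thm2}, the (partial) matrix $M_{\B}(\y)$ lies in $\Pi_{G_k}(\mathbf{S}^r_+)$ if and only if $P_{C}M_{\B}(\y)P_{C}^T\succeq0$ for every maximal clique $C$ of $G_k$; each such principal submatrix involves only the entries $y_{\b+\g}$ with $\b,\g\in C$, hence with $\{\b,\g\}\in E_k$ and $\b+\g\in\supp(G_k)$, so it is a genuine linear matrix inequality in the decision variables $(y_{\a})_{\a\in\supp(G_k)}$.

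Next I would exhibit a point that is feasible for $(\textrm{P}^k)$ and makes all these matrix inequalities strict. Let $\mu$ be a probability measure on $\R^n$ whose density is positive on some nonempty open set (for instance the normalized Lebesgue measure on the Euclidean unit ball), and let $y_{\a}$, $\a\in\supp(G_k)$, be the corresponding moments. Then $y_{\mathbf{0}}=1$, and for every maximal clique $C$ of $G_k$ the matrix $P_{C}M_{\B}(\y)P_{C}^T=(y_{\b+\g})_{\b,\g\in C}$ is the moment matrix of $\mu$ built from the monomials $\{\x^{\b}:\b\in C\}$. It is positive definite: a polynomial vanishing $\mu$-almost everywhere vanishes identically since $\mu$ has full-dimensional support, and the monomials $\x^{\b}$, $\b\in C\subseteq\B$, are pairwise distinct, hence linearly independent. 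Thus $\y$ lies in the relative interior of the feasible set of $(\textrm{P}^k)$.

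Finally, since $(\textrm{P})$ is feasible we have $\lambda_k\le\lambda_{sos}<+\infty$; if $\lambda_k$ is finite, combining the strict feasibility just established with conic duality gives $\lambda_k=\sup(\textrm{P}^k)^*$ with the supremum attained, while if $\lambda_k=-\infty$ then weak duality already forces $\sup(\textrm{P}^k)^*=-\infty$, so in every case there is no duality gap. I expect the argument to be essentially routine; the only point that needs care is the bookkeeping identifying the conic (Lagrangian) dual of $(\textrm{P}^k)$ with precisely the program $(\textrm{P}^k)^*$ of \eqref{sec4-uksos}. This rests on the fact that $\mathbf{S}^r_+\cap\mathbf{S}_{G_k}$ is the dual cone of $\Pi_{G_k}(\mathbf{S}^r_+)$ inside $\mathbf{S}_{G_k}$ (Theorems \ref{sec2-thm} and \ref{sec2-thm2}), together with the identity $\sum_{\a}\langle Q,A_{\a}\rangle\,\x^{\a}=(\x^{\B})^TQ\x^{\B}$; once both problems are in matching primal--dual conic form, the conclusion follows immediately from the relative-interior point constructed above.
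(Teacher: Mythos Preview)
Your proof is correct and follows essentially the same route as the paper's one-line argument, which simply invokes Proposition~3.1 of \cite{las1} (the Slater-type strict-feasibility argument for the dense relaxation) together with Theorem~\ref{sec2-thm2}. You have just unfolded that argument explicitly: using Theorem~\ref{sec2-thm2} to rewrite the constraint $M_{\B}(\y)\in\Pi_{G_k}(\mathbf{S}^r_+)$ as a finite list of PSD conditions on clique submatrices, and then producing a strictly feasible moment sequence exactly as in the dense case.
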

\begin{proof}
This easily follows from Proposition 3.1 of \cite{las1} for the dense case and Theorem \ref{sec2-thm2}.
\end{proof}

\subsection*{\newtssos{Comparison with SDSOS \cite{ahmadi2014dsos}}} The following definition of SDSOS polynomials has been introduced and studied in \cite{ahmadi2014dsos}. 
A symmetric matrix $Q\in\mathbf{S}^r$ is {\em diagonally dominant} if $Q_{ii}\ge\sum_{j=1}^r|Q_{ij}|$ for $i=1,\ldots,r$ and is {\em scaled diagonally dominant} if there exists a positive definite $r\times r$ diagonal matrix $D$ such that $DQD$ is diagonally dominant. 
We say that a polynomial $f(\x)\in\R[\x]$ is a {\em scaled diagonally dominant sum of squares} (SDSOS) polynomial if it admits a Gram matrix representation \eqref{sec2-eq2} with a scaled diagonally dominant Gram matrix $Q$. We denote the set of SDSOS polynomials by $SDSOS$.

\newtssos{Following \cite{ahmadi2014dsos}, by replacing the nonnegativity condition in ($\textrm{P}_0$) with the SDSOS condition, one obtains the SDSOS relaxation of (P) and ($\textrm{P}_0$):}
\begin{equation*}
(\textrm{SDSOS}):\quad\lambda_{sdsos}:=\sup_{\lambda}\{\lambda : f(\x)-\lambda\in\SDSOS\}.
\end{equation*}
\begin{theorem}\label{sec4-thm1}
\newtssos{With the above notation, $\lambda_1\ge\lambda_{sdsos}$.}
\end{theorem}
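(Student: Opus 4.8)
The plan is to show that the SDSOS relaxation is itself a relaxation of $(\textrm{P}^1)^*$, i.e. that every feasible point of $(\textrm{P}^1)^*$ yields a feasible point of $(\textrm{SDSOS})$ with the same value of $\lambda$. Since both are maximization problems with no duality gap (using the Proposition just proved for $(\textrm{P}^1)$ and standard strong duality for $(\textrm{SDSOS})$, which is an SOCP), this inclusion of feasible sets gives $\lambda_1 = \lambda_{sdsos}^{(\ast)} \ge \lambda_{sdsos}$ at the level of the dual problems, hence $\lambda_1 \ge \lambda_{sdsos}$.

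First I would recall the structure of the graph $G_1$. Since $G_1 = \overline{\SE(G_0)}$ is a chordal extension of the support-extension of the tsp graph, its edge set contains $\{\b,\g\}$ whenever $\b+\g \in \A \cup (2\B)$; in particular every self-loop $\{\b,\b\}$ is present (because $2\b \in 2\B$), so $G_1$ contains all diagonal entries. The key point is then: a scaled diagonally dominant matrix $Q$ can always be written as a sum over pairs $\{i,j\}$ of $2\times 2$ PSD matrices supported on the principal submatrix indexed by $\{i,j\}$ (this is the standard decomposition of the scaled-diagonally-dominant cone, cf.\ \cite{ahmadi2014dsos}). Such a $Q$ therefore has a sparsity pattern that, viewed as a graph, is a subgraph of the complete graph on $\B$; but more usefully, I want to go in the other direction: given a feasible $Q \in \mathbf{S}_+^r \cap \mathbf{S}_{G_1}$ for $(\textrm{P}^1)^*$, I need to produce a scaled diagonally dominant matrix $Q'$ satisfying the same coefficient-matching constraints $\langle Q', A_{\a}\rangle + \lambda\delta_{\mathbf{0}\a} = f_{\a}$ for all $\a \in \N^n_{2d}$.

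Here is where I would be careful, and this is the main obstacle: the inclusion $\lambda_1 \ge \lambda_{sdsos}$ should really be read as "$(\textrm{P}^1)$ is at least as strong as $(\textrm{SDSOS})$", which amounts to showing that the SDSOS cone (restricted to polynomials with support in $\A$) is contained in the cone of polynomials admitting a Gram matrix in $\mathbf{S}_+^r \cap \mathbf{S}_{G_1}$. So I would instead argue on the primal side of $(\textrm{SDSOS})$: if $f(\x)-\lambda \in \SDSOS$ with scaled diagonally dominant Gram matrix $Q$ in the basis $\B$, decompose $Q = \sum_{\{i,j\}} Q_{ij}$ into $2\times 2$ PSD blocks. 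Each block $Q_{ij}$ contributes to the polynomial only monomials of the form $\x^{2\b_i}$, $\x^{\b_i+\b_j}$, $\x^{2\b_j}$; since $f(\x)-\lambda$ has support in $\A \cup \{\mathbf 0\}$, the off-diagonal monomial $\x^{\b_i+\b_j}$ must lie in $\A$ for every pair $\{i,j\}$ with $[Q]_{ij}\neq 0$ (after possible cancellation — one must handle cancellation, but the cone decomposition can be chosen so that no cancellation of supports occurs, or one argues that the sum of $2\times 2$ blocks already has support in $\A\cup(2\B)$). Hence $\{\b_i,\b_j\}\in E(\SE(G_0)) \subseteq E(G_1)$, so $Q \in \mathbf{S}_{G_1}$, and of course $Q\succeq 0$. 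Thus $Q$ is feasible for $(\textrm{P}^1)^*$ with the same $\lambda$, giving $\lambda_1 \ge \lambda$ and therefore $\lambda_1 \ge \lambda_{sdsos}$.

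The routine parts are the $2\times 2$ decomposition of scaled diagonally dominant matrices and the equality of primal/dual optimal values for $(\textrm{P}^1)$ (already granted) and for $(\textrm{SDSOS})$ (a second-order cone program satisfying Slater-type conditions after a harmless perturbation). The delicate point, which I would state carefully, is the claim that a scaled diagonally dominant Gram matrix of a polynomial supported on $\A$ is automatically supported on the graph $G_1$; this is exactly where the definition of $\SE$ and the inclusion $\SE(G_0)\subseteq G_1$ enter, and it is the heart of the comparison.
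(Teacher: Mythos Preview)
Your overall strategy matches the paper's: show that every $\lambda$ feasible for $(\textrm{SDSOS})$ is also feasible for $(\textrm{P}^1)^*$. (Your opening paragraph actually states the inclusion backwards---if every feasible point of $(\textrm{P}^1)^*$ were feasible for $(\textrm{SDSOS})$ you would get $\lambda_1\le\lambda_{sdsos}$, since both are suprema---but you correct the direction in the body of the argument.)

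The genuine gap is exactly the step you flag as ``delicate'': the claim that a scaled diagonally dominant Gram matrix $Q$ of $f-\lambda$ automatically lies in $\mathbf{S}_{G_1}$. It need not. For $\a\notin\A\cup 2\B$ the coefficient-matching condition only gives $\sum_{\b+\g=\a}Q_{\b\g}=0$; individual entries $Q_{\b\g}$ with $\b+\g=\a$ can be nonzero. Your proposed rescues do not work: in the standard $2\times 2$ decomposition of an SDD matrix each off-diagonal entry $Q_{\b\g}$ comes from exactly one block, so the decomposition cannot be ``chosen to avoid cancellation''---the off-diagonals of $Q$ are fixed and the cancellation happens among \emph{different} pairs $(\b,\g)$ with the same sum, not within a block.

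The paper's fix is short and avoids the $2\times 2$ decomposition altogether. Given an SDD Gram matrix $Q$ for $f-\lambda$, define $\tilde{Q}$ by setting $\tilde{Q}_{\b\g}=0$ whenever $\b+\g\notin\A\cup 2\B$ and $\tilde{Q}_{\b\g}=Q_{\b\g}$ otherwise. Then: (i) $\tilde{Q}$ is still a Gram matrix for $f-\lambda$, since for $\a\in\A\cup2\B$ every term in $\sum_{\b+\g=\a}Q_{\b\g}$ is retained, while for $\a\notin\A\cup2\B$ the sum becomes $0=f_\a$; (ii) $\tilde{Q}$ is still scaled diagonally dominant, because only off-diagonal entries were zeroed, which can only help the diagonal dominance of $D Q D$; (iii) $\tilde{Q}\in\mathbf{S}_{G_0}\subseteq\mathbf{S}_{G_1}$ by construction. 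Since SDD implies PSD, $\tilde{Q}\in\mathbf{S}_+^{r}\cap\mathbf{S}_{G_1}$, so $(\lambda,\tilde{Q})$ is feasible for $(\textrm{P}^1)^*$ and $\lambda_1\ge\lambda_{sdsos}$. No strong-duality argument is needed.
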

\begin{proof}
Let $f\in\R[\x]$ with $\A=\supp(f)$ and $\B$ a monomial basis with $r=|\B|$. Assume that $f\in\SDSOS$, i.e.,  $f$ admits a scaled diagonally dominant Gram matrix $Q\in\mathbb{S}_+^r$ indexed by $\B$. We then construct a Gram matrix $\tilde{Q}$ for $f$ by
\begin{equation*}
\tilde{Q}_{\b\g}=
\begin{cases}
0,\quad&\textrm{if } \b+\g\notin\A\cup2\B, \\
Q_{\b\g},\quad&\textrm{otherwise}.
\end{cases}
\end{equation*}
It is easy to see that we still have $f=(\x^{\mathscr{B}})^T\tilde{Q}\x^{\mathscr{B}}$. Note that we only replace off-diagonal entries by zeros in $Q$ and replacing off-diagonal entries by zeros does not affect the scaled diagonal dominance of a matrix. Hence $\tilde{Q}$ is also a scaled diagonally dominant matrix. Moreover, we have $\tilde{Q}\in\mathbf{S}_+^{r}\cap\mathbf{S}_{G_1}$ by construction. It follows that $(\textrm{SDSOS})$ is a relaxation of $(\textrm{P}^1)^*$. Hence $\lambda_1\ge\lambda_{sdsos}$.
\end{proof}

The next result states that $\lambda_1=\lambda_{sos}$ always holds in the quadratic case.
\begin{theorem}\label{sec3-thm2}
Suppose that the objective function $f\in\R[\x]$ in ($\textrm{P}_0$) is a quadratic polynomial. Then $\lambda_1=\lambda_{sos}$.
\end{theorem}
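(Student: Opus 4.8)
The plan is to prove the two inequalities separately. The bound $\lambda_1\le\lambda_{sos}$ is the $k=1$ instance of \eqref{cliquehier} and so is already available; the whole content is to establish $\lambda_1\ge\lambda_{sos}$. The structural fact that makes this work is that, since $\deg f=2$, any monomial basis satisfies $\B\subseteq\N^n_1$, i.e. $\B\subseteq\{1,x_1,\dots,x_n\}$, and then \emph{every} $\a\in\B+\B$ has a \emph{unique} decomposition $\a=\b+\g$ with $\b,\g\in\B$ (up to swapping $\b$ and $\g$): if $\a=\mathbf 0$ then $\b=\g=\mathbf 0$; if $\a=\be_i$ then $\{\b,\g\}=\{\mathbf 0,\be_i\}$; if $\a=2\be_i$ then $\b=\g=\be_i$; and if $\a=\be_i+\be_j$ with $i\ne j$ then $\{\b,\g\}=\{\be_i,\be_j\}$. (If $\supp(f)\not\subseteq\B+\B$ then $L_{\y}(f)$ contains a moment not constrained by $M_{\B}(\y)$, forcing $\lambda_{sos}=-\infty$, and then $\lambda_1=-\infty$ as well; so we may assume $\supp(f)\subseteq\B+\B$.)

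I would draw two consequences of this uniqueness. First, no linear relations other than symmetry are forced among the entries of $M_{\B}(\y)$, so conversely \emph{any} symmetric matrix $M\in\mathbf{S}^r$ indexed by $\B$ equals $M_{\B}(\hat\y)$ for a suitable sequence $\hat\y$ (set $\hat y_{\b+\g}:=M_{\b\g}$, which is unambiguous). Second, for $\b\ne\g$ one has $\b+\g\ne\mathbf 0$, so passing from $f$ to $f-\lambda$ changes only the $(\mathbf 0,\mathbf 0)$-entry of a Gram matrix; hence an off-diagonal position $\{\b,\g\}$ can carry a nonzero entry of a Gram matrix of $f-\lambda$ only when $\b+\g\in\supp(f)$, i.e. only when $\{\b,\g\}\in E_0$, and $E_0\subseteq E(\SE(G_0))\subseteq E_1$ by construction of $G_1$. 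The diagonal (self-loop) positions all lie in $E_0\subseteq E_1$ as well, since $2\b\in 2\B$.

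Given these facts the argument is short. To see $\lambda_1\ge\lambda_{sos}$, take any $\y$ feasible for $(\textrm{P}^1)$, so $M:=M_{\B}(\y)\in\Pi_{G_1}(\mathbf{S}_+^r)$ and $y_{\mathbf 0}=1$. Since $G_1$ is chordal, Theorem~\ref{sec2-thm2} provides a positive semidefinite completion $\hat M\succeq0$ of $M$ that agrees with $M$ at every position in $E_1$. By the first consequence above, $\hat M=M_{\B}(\hat\y)$ for some $\hat\y$, and $\hat y_{\mathbf 0}=\hat M_{\mathbf 0\mathbf 0}=y_{\mathbf 0}=1$, so $\hat\y$ is feasible for $(\textrm{P})$. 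Moreover, whenever $f_{\a}\ne0$ the unique pair $\{\b,\g\}$ with $\b+\g=\a$ lies in $E_0\subseteq E_1$, so $\hat y_{\a}=\hat M_{\b\g}=M_{\b\g}=y_{\a}$; hence $L_{\hat\y}(f)=L_{\y}(f)$. Therefore $L_{\y}(f)\ge\lambda_{sos}$ for every feasible $\y$, which is exactly $\lambda_1\ge\lambda_{sos}$. (Dually, the uniqueness of the Gram matrix shows that $f-\lambda$ admits a PSD Gram matrix if and only if it admits one supported on $G_1$, so $(\textrm{P})^*$ and $(\textrm{P}^1)^*$ are literally the same program; together with the absence of a duality gap — Proposition~3.1 of \cite{las1} and the no-duality-gap result established above — this gives $\lambda_{sos}=\lambda_1$ again.)

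I do not expect a genuine obstacle here: there is no analytic step, only the combinatorial uniqueness of the quadratic Gram/moment matrix. The points that need care are purely bookkeeping — allowing $\lambda_{sos}=-\infty$ in degenerate instances (handled by the reduction above), and checking that the diagonal/self-loop positions sit in $E_1$ so that $\hat M$ is a legitimate element of the completion of $\Pi_{G_1}(\mathbf{S}_+^r)$.
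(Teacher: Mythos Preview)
Your proof is correct. The paper's own argument is essentially your parenthetical dual remark: taking $\B=\{\be_0,\be_1,\dots,\be_n\}$, any PSD Gram matrix $Q$ of $f$ satisfies $Q_{\b\g}=0$ whenever $\{\b,\g\}\notin E(G_0)$ (because then $\b+\g\notin\A$ and the coefficient-matching condition forces that entry to vanish), so $Q\in\mathbf{S}_{G_0}\subseteq\mathbf{S}_{G_1}$ automatically and the feasible sets of $(\textrm{P})^*$ and $(\textrm{P}^1)^*$ coincide. Your primary argument instead works on the primal side, lifting a feasible moment sequence for $(\textrm{P}^1)$ to one for $(\textrm{P})$ via PSD completion; this is a valid alternative resting on the same combinatorial observation (unique decomposition of $\a=\b+\g$ with $\b,\g\in\N^n_1$). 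The dual route is shorter since it bypasses the completion step entirely; your primal route is a bit more explicit about where uniqueness enters and handles the degenerate $\lambda_{sos}=-\infty$ case that the paper leaves implicit. One minor point: the existence of the PSD completion $\hat M$ is precisely the definition of $M\in\Pi_{G_1}(\mathbf{S}_+^r)$, so you need not invoke chordality or Theorem~\ref{sec2-thm2} at that step.
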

\begin{proof}
Assume $\supp(f)=\A$. Since $f$ is quadratic, we take $\B=\{\be_0,\be_1,\ldots,\be_n\}$ as a monomial basis, where $\be_0=\mathbf{0}$ and $\{\be_i\}_{i=1}^n$ is the standard basis of $\R^n$. Let $G_0$ be the tsp graph associated with $f$.
We only need to prove that if $f$ admits a PSD Gram matrix, then $f$ admits a Gram matrix in $\mathbf{S}_+^{n+1}\cap\mathbf{S}_{G_0}$.
Suppose that $Q=[q_{ij}]_{i,j=0}^n$ is a PSD Gram matrix of $f$ indexed by $\B$. Note that for all $i,j$, if $\{\be_i,\be_j\}\not\in E(G_0)$, then we have $\be_i+\be_j\notin\A$, which implies $q_{ij}=0$. It follows that $Q\in\mathbf{S}_{G_0}$ as desired.
\end{proof}

\section{\newtssos{The chordal-TSSOS Hierarchy: Constrained case}}\label{sec4}
In this section, we describe an iterative procedure to exploit term-sparsity for the primal-dual moment-SOS hierarchy \eqref{sec2-eq11}-\eqref{sec2-eq12} of the constrained POP $(\textrm{Q}_0)$
\newtssos{defined in \eqref{sec2-eq9}-\eqref{sec2-eq10}}. Let
\begin{equation*}
\mathscr{A} = \supp(f)\cup\bigcup_{j=1}^m\supp(g_j).
\end{equation*}

Let $d_j:=\lceil\deg(g_j)/2\rceil,j=1,\ldots,m$ and $d:=\max\{\lceil\deg(f)/2\rceil,d_1,\ldots,d_m\}$. Fix a relaxation order $\hat{d}\ge d$ \newtssos{in} Lasserre's hierarchy \eqref{sec2-eq11}. Let $g_0=1$, $d_0=0$ and $\B_{j,\hat{d}}=\N^n_{\hat{d}-d_j}$ be the standard monomial basis for $j=0,\ldots,m$. We define a graph $G_{0,\hat{d}}^{(0)}(V_{0,\hat{d}},E_{0,\hat{d}}^{(0)})$ with $V_{0,\hat{d}}^{(0)}=\B_{0,\hat{d}}$ and
\begin{equation}\label{sec4-eq1}
E_{0,\hat{d}}^{(0)}=\{\{\b,\g\}\mid\b+\g\in\A\cup(2\B_{0,\hat{d}})\}.
\end{equation}
We call $G_0$ the {\em term-sparsity pattern (tsp) graph} associated with $(\textrm{Q}_0)$. 

For $k\ge1$, we recursively define a sequence of graphs $\{G_{j,\hat{d}}^{(k)}(V_{j,\hat{d}},E_{j,\hat{d}}^{(k)})\}_{k\ge1}$ with $V_{j,\hat{d}}=\B_{j,\hat{d}}$ for $j=0,\ldots,m$ by
\begin{equation}\label{sec4-graph}
G_{0,\hat{d}}^{(k)}:=\overline{\SE(G_{0,\hat{d}}^{(k-1)})}\textrm{ and }G_{j,\hat{d}}^{(k)}:=\overline{F_{j,\hat{d}}^{(k)}}, j=1,\ldots,m,
\end{equation}
where $F_{j,\hat{d}}^{(k)}$ is the graph with $V(F_{j,\hat{d}}^{(k)})=\B_{j,\hat{d}}$ and
\begin{equation}\label{sec4-eq2}
E(F_{j,\hat{d}}^{(k)})=\{\{\b,\g\}\mid(\supp(g_j)+\b+\g)\cap\supp(G_{0,\hat{d}}^{(k-1)})\ne\emptyset\}, j=1,\ldots,m.
\end{equation}

Let $r_j:=\binom{n+\hat{d}-d_j}{\hat{d}-d_j}$.
Therefore by replacing $M_{\hat{d}-d_j}(g_j\y)\succeq0$ with the weaker condition $M_{\hat{d}-d_j}(g_j\y)\in\Pi_{G_{j,\hat{d}}^{(k)}}(\mathbf{S}_+^{r_j})$  for $j=0,\ldots,m$ in \eqref{sec2-eq11}, we obtain the following sparse SDP relaxation of ($\textrm{Q}_{\hat{d}}$) and $(\textrm{Q}_0)$ for each $k\ge1$:
\begin{equation}\label{sec-eq1}
(\textrm{Q}_{\hat{d}}^k):\quad
\begin{array}{rl}
\lambda^{(k)}_{\hat{d}}:=\inf &\,\{\,L_{\y}(f)\,:\\
\textrm{s.t.}&M_{\hat{d}}(\y)\in\Pi_{G_{0,\hat{d}}^{(k)}}(\mathbf{S}_+^{r_0}),\\
&M_{\hat{d}-d_j}(g_j\y)\in\Pi_{G_{j,\hat{d}}^{(k)}}(\mathbf{S}_+^{r_j}),\quad j=1,\ldots,m,\\
&y_{\mathbf{0}}=1\,\}\,.
\end{array}
\end{equation}
We call $k$ the {\em sparse order}. By construction we have $G_{j,\hat{d}}^{(k)}\subseteq G_{j,\hat{d}}^{(k+1)}$ for all $j,k$. Therefore, for every $j$, the sequence of graphs
$\{G_{j,\hat{d}}^{(k)}\}_{k\ge1}$ stabilizes after a finite number of steps.

\begin{theorem}\label{sec6-thm1}
For fixed $\hat{d}\ge d$, the sequence $\{\lambda^{(k)}_{\hat{d}}\}_{k\ge1}$ is monotone nondecreasing and $\lambda^{(k)}_{\hat{d}}\le\lambda_{\hat{d}}$ for all $k$ \newtssos{(with $\lambda_{\hat{d}}$ as in \eqref{sec2-eq11}).}
\end{theorem}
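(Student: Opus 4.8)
The plan is to replay the proof of the corresponding unconstrained theorem above, now applying the same argument simultaneously to the moment matrix $M_{\hat{d}}(\y)$ and to each localizing matrix $M_{\hat{d}-d_j}(g_j\y)$ for $j=1,\ldots,m$ (with the convention $g_0=1$, $d_0=0$, so that $M_{\hat{d}-d_0}(g_0\y)=M_{\hat{d}}(\y)$). Two ingredients are needed. First, that each $G_{j,\hat{d}}^{(k)}$ is chordal --- it is a chordal extension by construction --- and that the inclusions $G_{j,\hat{d}}^{(k)}\subseteq G_{j,\hat{d}}^{(k+1)}$ hold for all $j$ and $k$, as recorded just before the statement. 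Second, Theorem~\ref{sec2-thm2}, which turns a condition of the form $M\in\Pi_{G}(\mathbf{S}_+^{r})$ into the finite list of block conditions $P_{C}MP_{C}^{T}\succeq0$, one for each maximal clique $C$ of $G$. The hypothesis $\hat{d}\ge d$ is used only to make the bases $\B_{j,\hat{d}}$, the tsp graphs, and the relaxations $(\textrm{Q}_{\hat{d}}^{k})$ well defined, and to ensure $\supp(f)$ lies in the support of every $G_{0,\hat{d}}^{(k)}$, so that $L_{\y}(f)$ is literally the same objective in all the relaxations.

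For monotonicity, I would first upgrade the inclusions to: every maximal clique of $G_{j,\hat{d}}^{(k)}$ is contained in some maximal clique of $G_{j,\hat{d}}^{(k+1)}$ --- a clique of the smaller graph is a clique of the larger one and hence lies inside one of its maximal cliques. Then, given $\y$ feasible for $(\textrm{Q}_{\hat{d}}^{k+1})$, Theorem~\ref{sec2-thm2} gives $P_{C}M_{\hat{d}-d_j}(g_j\y)P_{C}^{T}\succeq0$ for every $j\in\{0,\ldots,m\}$ and every maximal clique $C$ of $G_{j,\hat{d}}^{(k+1)}$; restricting to a maximal clique $C'\subseteq C$ of $G_{j,\hat{d}}^{(k)}$ extracts a principal submatrix, which is again PSD. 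By Theorem~\ref{sec2-thm2} in the reverse direction, (a suitable truncation of) $\y$ is then feasible for $(\textrm{Q}_{\hat{d}}^{k})$ with unchanged objective value, so $(\textrm{Q}_{\hat{d}}^{k})$ is a relaxation of $(\textrm{Q}_{\hat{d}}^{k+1})$ and $\lambda_{\hat{d}}^{(k)}\le\lambda_{\hat{d}}^{(k+1)}$. The bound $\lambda_{\hat{d}}^{(k)}\le\lambda_{\hat{d}}$ follows by the same device: $M_{\hat{d}-d_j}(g_j\y)\succeq0$ forces all of its principal submatrices --- in particular those indexed by the maximal cliques of $G_{j,\hat{d}}^{(k)}$ --- to be PSD, so any $\y$ feasible for the dense relaxation $(\textrm{Q}_{\hat{d}})$ gives a feasible point of $(\textrm{Q}_{\hat{d}}^{k})$ with the same value. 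Together these yield the two assertions of the theorem, i.e.\ the chain $\lambda_{\hat{d}}\ge\cdots\ge\lambda_{\hat{d}}^{(2)}\ge\lambda_{\hat{d}}^{(1)}$.

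The step I expect to require genuine care is not the clique-inclusion argument but the bookkeeping hidden in the phrase ``$\y$ is feasible for $(\textrm{Q}_{\hat{d}}^{k})$ with unchanged objective''. The feasible sets of the sparse relaxations are not literally nested --- the moment matrix of a point feasible for $(\textrm{Q}_{\hat{d}})$ is dense, hence does not have the sparsity pattern $G_{0,\hat{d}}^{(k)}$ --- so one must pass to a suitable truncation of $\y$ (keeping only the coordinates that enter the block conditions retained from the finer, resp.\ dense, relaxation) and then check that the retained blocks are unchanged, that the truncated moment and localizing matrices still lie in the prescribed cones $\Pi_{G_{j,\hat{d}}^{(k)}}(\mathbf{S}_+^{r_j})$, and that the objective --- which reads only the coefficients on $\supp(f)\subseteq\supp(G_{0,\hat{d}}^{(k)})$ together with $y_{\mathbf{0}}=1$ --- is preserved. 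This is routine but is the one place where the argument genuinely uses the explicit definitions of the tsp graphs $G_{j,\hat{d}}^{(k)}$ and of the projections $\Pi_{G}$, rather than invoking Theorem~\ref{sec2-thm2} as a black box.
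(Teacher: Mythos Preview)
Your proposal is correct and follows essentially the same route as the paper's own proof: the paper also argues that for all $j,k$ each maximal clique of $G_{j,\hat{d}}^{(k)}$ sits inside some maximal clique of $G_{j,\hat{d}}^{(k+1)}$, and then invokes Theorem~\ref{sec2-thm2} to conclude that $(\textrm{Q}_{\hat{d}}^{k})$ is a relaxation of $(\textrm{Q}_{\hat{d}}^{k+1})$ and of $(\textrm{Q}_{\hat{d}})$. Your final paragraph on the ``truncation'' bookkeeping is a careful unpacking of what the paper simply calls ``relaxation''; under the intended reading of the constraint $M\in\Pi_G(\mathbf{S}_+^r)$ (only the entries indexed by $\supp(G)$ are active, and the clique-indexed principal submatrices must be PSD), the feasible sets are nested after projection and this step is immediate, so your concern is valid but not a point of divergence from the paper.
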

\begin{proof}
By construction, for all $j,k$, each maximal clique of $G_{j,\hat{d}}^{(k)}$ is a subset of some maximal clique of $G_{j,\hat{d}}^{(k+1)}$. Hence by Theorem \ref{sec2-thm2}, $(\textrm{Q}_{\hat{d}}^{k})$ is a relaxation of $(\textrm{Q}_{\hat{d}}^{k+1})$ (and also a relaxation of $(\textrm{Q}_{\hat{d}})$). Therefore, $\{\lambda^{(k)}_{\hat{d}}\}_{k\ge1}$ is monotone nondecreasing and $\lambda^{(k)}_{\hat{d}}\le\lambda_{\hat{d}}$ for all $k$.
\end{proof}

\begin{theorem}\label{sec6-thm2}
For fixed $k\ge1$, the sequence $\{\lambda^{(k)}_{\hat{d}}\}_{\hat{d}\ge d}$ is monotone nondecreasing.
\end{theorem}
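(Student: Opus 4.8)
The plan is to adapt the classical argument for monotonicity of Lasserre's hierarchy in the relaxation order to the term-sparsity setting: I will show that every feasible point of $(\textrm{Q}_{\hat d+1}^{k})$ restricts to a feasible point of $(\textrm{Q}_{\hat d}^{k})$ with the same value of $L_{\y}(f)$, whence $\lambda^{(k)}_{\hat d}\le\lambda^{(k)}_{\hat d+1}$ after taking infima. Concretely, given $\y=(y_{\a})_{\a\in\N^n_{2\hat d+2}}$ feasible for $(\textrm{Q}_{\hat d+1}^{k})$, set $\y':=(y_{\a})_{\a\in\N^n_{2\hat d}}$. Since $\deg f\le 2d\le 2\hat d$ we have $L_{\y'}(f)=L_{\y}(f)$ and $y'_{\mathbf{0}}=1$. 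With $g_0=1$, $d_0=0$, for each $j\in\{0,\dots,m\}$ and $\b,\g\in\B_{j,\hat d}=\N^n_{\hat d-d_j}\subseteq\N^n_{\hat d+1-d_j}=\B_{j,\hat d+1}$ and $\a\in\supp(g_j)$ one has $|\a+\b+\g|\le 2\hat d$, so $M_{\hat d-d_j}(g_j\y')$ is exactly the principal submatrix of $M_{\hat d+1-d_j}(g_j\y)$ indexed by $\N^n_{\hat d-d_j}$. Hence, by Theorem~\ref{sec2-thm2}, feasibility of $\y'$ for $(\textrm{Q}_{\hat d}^{k})$ will follow once we know that every maximal clique of $G_{j,\hat d}^{(k)}$ is a clique of $G_{j,\hat d+1}^{(k)}$: such a clique then lies inside a maximal clique of $G_{j,\hat d+1}^{(k)}$, on which the block of $M_{\hat d+1-d_j}(g_j\y)$ is PSD, so its principal subblock indexed by the given clique is PSD too.

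The key lemma, proved by induction on $k$, is: for all $j\in\{0,\dots,m\}$ and all $k\ge 0$ (with $k\ge 1$ when $j\ge 1$), the graph $G_{j,\hat d}^{(k)}$ is a subgraph of $G_{j,\hat d+1}^{(k)}$, where both are viewed as graphs on node sets inside $\N^n$ via $\N^n_{\hat d-d_j}\subseteq\N^n_{\hat d+1-d_j}$. Base case $k=0$ (only $j=0$): an edge $\{\b,\g\}$ of $G_{0,\hat d}^{(0)}$ has $\b,\g\in\N^n_{\hat d}$ and $\b+\g\in\A\cup(2\N^n_{\hat d})\subseteq\A\cup(2\N^n_{\hat d+1})$, so by \eqref{sec4-eq1} it is an edge of $G_{0,\hat d+1}^{(0)}$. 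Inductive step: assume $G_{0,\hat d}^{(k-1)}\subseteq G_{0,\hat d+1}^{(k-1)}$. Then $\supp(G_{0,\hat d}^{(k-1)})\subseteq\supp(G_{0,\hat d+1}^{(k-1)})$, which gives $\SE(G_{0,\hat d}^{(k-1)})\subseteq\SE(G_{0,\hat d+1}^{(k-1)})$ by the same edge-by-edge argument, and likewise $F_{j,\hat d}^{(k)}\subseteq F_{j,\hat d+1}^{(k)}$ from \eqref{sec4-eq2}. Applying the compatibility assumption on chordal extensions ($E(G)\subseteq E(H)\Rightarrow E(\overline G)\subseteq E(\overline H)$) together with the recursion \eqref{sec4-graph} yields $G_{j,\hat d}^{(k)}\subseteq G_{j,\hat d+1}^{(k)}$ for every $j$, completing the induction.

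Granting the lemma, the reduction of the first paragraph applies: $\y'$ is feasible for $(\textrm{Q}_{\hat d}^{k})$ with $L_{\y'}(f)=L_{\y}(f)$, so the set of objective values attained over the feasible set of $(\textrm{Q}_{\hat d}^{k})$ contains that of $(\textrm{Q}_{\hat d+1}^{k})$, and taking infima gives $\lambda^{(k)}_{\hat d}\le\lambda^{(k)}_{\hat d+1}$.

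The step I expect to require the most care is the chordal-extension part of the inductive lemma, because the compatibility hypothesis in the paper is stated for graphs with the \emph{same} node set, whereas here $\SE(G_{0,\hat d}^{(k-1)})$ lives on $\N^n_{\hat d}$ and $\SE(G_{0,\hat d+1}^{(k-1)})$ on the strictly larger $\N^n_{\hat d+1}$. To make this airtight I would either extend that hypothesis to the case $V(G)\subseteq V(H)$ — legitimate, since the restriction of a chordal extension of $H$ to $V(G)$ is an induced subgraph of a chordal graph, hence chordal, and still contains $G$, so a compatible chordal-extension routine may be assumed to respect this inclusion — or note directly that the nodes of $\N^n_{\hat d+1}\setminus\N^n_{\hat d}$ are irrelevant to the cliques of $G_{0,\hat d}^{(k)}$, so the two extensions differ only in edges incident to those extra nodes. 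Everything else (monotonicity of $\SE$, of $\supp$, and of the $F_{j,\hat d}^{(k)}$ construction, and the principal-submatrix bookkeeping for the moment and localizing matrices) is routine.
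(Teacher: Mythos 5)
Your proof follows essentially the same route as the paper: the key lemma $G_{j,\hat d}^{(k)}\subseteq G_{j,\hat d+1}^{(k)}$ is established by induction on $k$ from \eqref{sec4-eq1}, \eqref{sec4-graph} and \eqref{sec4-eq2}, and the conclusion then follows via Theorem~\ref{sec2-thm2} by viewing $(\textrm{Q}_{\hat d}^{k})$ as a relaxation of $(\textrm{Q}_{\hat d+1}^{k})$. You are in fact more careful than the paper on one point it silently elides, namely that the compatibility assumption on chordal extensions is stated for graphs sharing a node set while here $\N^n_{\hat d}\subsetneq\N^n_{\hat d+1}$; your fix is legitimate and the rest of the bookkeeping is correct.
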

\begin{proof}
The conclusion follows if we can show that $G_{j,\hat{d}}^{(k)}\subseteq G_{j,\hat{d}+1}^{(k)}$ for all $j,\hat{d}$ since by Theorem \ref{sec2-thm2} this implies that $(\textrm{Q}_{\hat{d}}^{k})$ is a relaxation of $(\textrm{Q}_{\hat{d}+1}^{k})$. Let us prove $G_{j,\hat{d}}^{(k)}\subseteq G_{j,\hat{d}+1}^{(k)}$ by induction on $k$. For $k=1$, from $\eqref{sec4-eq1}$, we have $E_{0,\hat{d}}^{(0)}\subseteq E_{0,\hat{d}+1}^{(0)}$, which implies that $G_{j,\hat{d}}^{(1)}\subseteq G_{j,\hat{d}+1}^{(1)}$ for $j=0,\ldots,m$. Now assume that $G_{j,\hat{d}}^{(k)}\subseteq G_{j,\hat{d}+1}^{(k)}$, $j=0,\ldots,m$ hold for a given $k\geq 1$. Then from $\eqref{sec4-graph}$ and $\eqref{sec4-eq2}$ and by the induction hypothesis, we have $G_{j,\hat{d}}^{(k+1)}\subseteq G_{j,\hat{d}+1}^{(k+1)}$ for $j=0,\ldots,m$, which completes the induction and the proof.
\end{proof}

Combining Theorem \ref{sec6-thm1} and Theorem \ref{sec6-thm2}, we have the following two-level hierarchy of lower bounds for the optimum of $(\textrm{Q}_0)$:
\begin{equation}\label{cliquehierc}
\begin{matrix}
\lambda^{(1)}_{d}&\le&\lambda^{(2)}_{d}&\le&\cdots&\le&\lambda_{d}\\
\vge&&\vge&&&&\vge\\
\lambda^{(1)}_{d+1}&\le&\lambda^{(2)}_{d+1}&\le&\cdots&\le&\lambda_{d+1}\\
\vge&&\vge&&&&\vge\\
\vdots&&\vdots&&\vdots&&\vdots\\
\vge&&\vge&&&&\vge\\
\lambda^{(1)}_{\hat{d}}&\le&\lambda^{(2)}_{\hat{d}}&\le&\cdots&\le&\lambda_{\hat{d}}\\
\vge&&\vge&&&&\vge\\
\vdots&&\vdots&&\vdots&&\vdots\\
\end{matrix}
\end{equation}
\newtssos{The array of lower bounds \eqref{cliquehierc} (and its associated SDP-relaxations \eqref{sec-eq1}) is what we call the {\em chordal}-TSSOS moment-SOS hierarchy 
(in short {\em chordal}-TSSOS hierarchy) associated with $(\textrm{Q}_0)$}.

For each $k\ge1$, the dual of $(\textrm{Q}_{\hat{d}}^k)$ is
\begin{equation}\label{sec6-eq1}
(\textrm{Q}_{\hat{d}}^k)^*:
\begin{cases}
\sup\,&\lambda\\
\textrm{s.t.}\, &\sum_{j=0}^m\langle Q_j,D_{\a}^j\rangle+\lambda\delta_{\mathbf{0}\a}=f_{\a},\forall\a\in\cup_{j=0}^m(\supp(g_j)+\supp(G_{j,\hat{d}}^{(k)})),\\
&Q_j\in\mathbf{S}_+^{r_j}\cap\mathbf{S}_{G_{j,\hat{d}}^{(k)}},\quad j=0,\ldots,m,
\end{cases}
\end{equation}
where $D_{\a}^j$ is defined in Sec.~\ref{intro1}.

\begin{proposition}
Let $f\in\R[\x]$ and $\mathbf{K}$ be as in \eqref{sec2-eq10}. Assume that $K$ has a nonempty interior. Then there is no duality gap between $(\textrm{Q}_{\hat{d}}^k)$ and $(\textrm{Q}_{\hat{d}}^k)^*$ for any $\hat{d}\ge d$ and $k\ge1$.
\end{proposition}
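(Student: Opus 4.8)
The plan is to reduce the no-duality-gap claim to the classical result for the dense Lasserre relaxation by exhibiting a strictly feasible point (Slater point) for the primal SDP $(\textrm{Q}_{\hat{d}}^k)$, since strong duality for a conic program follows from Slater's condition on the feasible side whose dual we want to match. Concretely, I would first recall that when $\mathbf{K}$ has nonempty interior, the dense moment relaxation $(\textrm{Q}_{\hat{d}})$ admits a strictly feasible moment sequence: take any point $\x_0$ in the interior of $\mathbf{K}$ and a probability measure $\mu$ with density supported on a small ball around $\x_0$ contained in $\mathbf{K}$; then its truncated moment sequence $\y$ satisfies $M_{\hat{d}}(\y)\succ0$ and $M_{\hat{d}-d_j}(g_j\y)\succ0$ for $j=1,\dots,m$ (all localizing matrices are positive definite because the measure has full-dimensional support and $g_j>0$ on that support). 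This is exactly the standard argument used in Proposition 3.1 of \cite{las1} (invoked earlier in the excerpt), so I would cite it rather than reprove it.

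Next I would observe that this same $\y$ is feasible — in fact strictly feasible in the appropriate sense — for the sparse relaxation $(\textrm{Q}_{\hat{d}}^k)$. The point is that the sparse constraint $M_{\hat{d}-d_j}(g_j\y)\in\Pi_{G_{j,\hat{d}}^{(k)}}(\mathbf{S}_+^{r_j})$ is, by Theorem \ref{sec2-thm2}, equivalent to requiring that each principal submatrix $P_{C}M_{\hat{d}-d_j}(g_j\y)P_{C}^T$ indexed by a maximal clique $C$ of $G_{j,\hat{d}}^{(k)}$ be PSD. If the full matrix $M_{\hat{d}-d_j}(g_j\y)$ is positive \emph{definite}, then every principal submatrix is positive definite as well, so $\y$ lies in the relative interior of the feasible region of $(\textrm{Q}_{\hat{d}}^k)$ with respect to the affine subspace cut out by $y_{\mathbf{0}}=1$. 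Hence Slater's condition holds for $(\textrm{Q}_{\hat{d}}^k)$, and strong conic duality gives that the optimal values of $(\textrm{Q}_{\hat{d}}^k)$ and $(\textrm{Q}_{\hat{d}}^k)^*$ coincide (and the dual optimum is attained). I would phrase this as: the proof is identical to the dense case of \cite[Proposition 3.1]{las1}, with Theorem \ref{sec2-thm2} used to transfer positive definiteness of the dense moment/localizing matrices to the clique-submatrix formulation.

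The one point requiring a little care — and what I expect to be the main obstacle, though it is minor — is making sure the duality pairing between $(\textrm{Q}_{\hat{d}}^k)$ and $(\textrm{Q}_{\hat{d}}^k)^*$ as written in \eqref{sec-eq1}–\eqref{sec6-eq1} is genuinely the conic dual, so that Slater on the primal yields zero gap and dual attainment. Here one should note that $\Pi_{G}(\mathbf{S}_+^{|V|})$ and $\mathbf{S}_+^{|V|}\cap\mathbf{S}_{G}$ are a dual pair of cones inside $\mathbf{S}_{G}$ (stated in the excerpt just before Theorem \ref{sec2-thm2}), so that the sparse primal and sparse dual are indeed a primal-dual conic pair; then the Slater point constructed above activates the standard conic strong-duality theorem. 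I would therefore write the proof in two short sentences: invoke \cite[Proposition 3.1]{las1} to get a strictly feasible $\y$ for the dense relaxation when $\mathbf{K}$ has nonempty interior, then note via Theorem \ref{sec2-thm2} that $\y$ is strictly feasible for $(\textrm{Q}_{\hat{d}}^k)$ as well, whence Slater's condition gives the absence of a duality gap.
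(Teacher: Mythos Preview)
Your proposal is correct and follows essentially the same approach as the paper: invoke the strict-feasibility/strong-duality result for the dense relaxation from \cite{las1} and then use Theorem \ref{sec2-thm2} to carry it over to the sparse clique-submatrix formulation. The only minor discrepancy is the citation: for the constrained case the paper cites Theorem 4.2 of \cite{las1} rather than Proposition 3.1, but the underlying Slater argument you spell out is exactly what that reference provides.
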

\begin{proof}
By the duality theory of convex programming, this easily follows from Theorem 4.2 of \cite{las1} for the dense case and Theorem \ref{sec2-thm2}.
\end{proof}


\begin{remark}
As in the unconstrained case, \newtssos{there is no theoretical guarantee that the sequence of optimal values $\{\lambda_{\hat{d}}^{(k)}\}_{k\ge1}$ of the chordal-TSSOS hierarchy converges to the optimal value $\lambda_{\hat{d}}$ of the corresponding dense moment-SOS relaxation for ($\textrm{Q}_0$).} 
\newtssos{However, as observed in our numerical experiments, finite convergence takes place in many cases. Moreover, theoretical convergence is guaranteed if one mixes  the current ``clique-based" chordal-TSSOS hierarchy with the ``block-based'' TSSOS hierarchy from  \cite{wang2}.}
\end{remark}

\begin{remark}
As in Theorem \ref{sec3-thm2}, for quadratically constrained quadratic problems, we always have $\lambda^{(1)}_{1}=\lambda_{1}$.
\end{remark}

\begin{remark}
\newtssos{Our above treatment easily extends to include equality constraints.}
\end{remark}

\bigskip

\noindent{\bf Obtaining a possibly smaller initial monomial basis $\mathscr{B}$.}
\newtssos{The chordal-TSSOS hierarchy depends on the chosen initial monomial basis $\mathscr{B}$
and therefore its choice can have a significant impact on the overall efficiency of the hierarchy. For instance for unconstrained POPs, the Newton polytope method usually provides a monomial basis smaller than the standard monomial basis. However, this method} \newtssos{does not 
apply to constrained POPs. Here as an optional pre-treatment of POPs,
we provide an iterative procedure which not only enables us to obtain an initial  monomial basis $\mathscr{B}$ smaller than the one given by the Newton polytope method for unconstrained POPs in many cases, but can also be applied to constrained POPs. It sometimes leads to an initial monomial basis
$\mathscr{B}$ smaller than an obvious choice.}

We start with the unconstrained case. Let $f\in\R[\x]$ with $\A=\supp(f)$ and $\mathscr{B}$ the monomial basis given by the Newton polytope method. Set $\B_0:=\emptyset$. For $p\ge1$, we iteratively define a sequence of monomial sets $\{\B_p\}_p$ by
\begin{equation}
    \B_{p}:=\{\b\in\B\mid\exists\g\in\B\textrm{ s.t. }\b+\g\in\A\cup2\B_{p-1}\}.
\end{equation}

Consequently, we obtain an increasing chain of monomial sets:
$$\B_1\subseteq\B_2\subseteq\B_3\subseteq\cdots\subseteq\B.$$
Clearly, the above chain will stabilize in finite steps. Each $\B_p$ can serve as a candidate monomial basis. Particularly, we have
\begin{proposition}\label{sec5-prop1}
Let $f\in\R[\x]$ and $\B_*=\cup_{p\ge1}\B_p$. If $f\in\SDSOS$, then $f$ is an SDSOS polynomial in the monomial basis $\B_*$.
\end{proposition}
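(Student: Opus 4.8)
The plan is to convert an arbitrary SDSOS certificate of $f$ written in the Newton polytope basis $\B$ into one supported on $\B_*$, by deleting the ``useless'' entries of the Gram matrix while preserving scaled diagonal dominance. Since $f\in\SDSOS$, arguing as in the proof of Theorem~\ref{sec4-thm1} (passing from the standard monomial basis to $\B$ deletes only identically zero rows and columns of the Gram matrix, by the Newton polytope method), $f$ admits a scaled diagonally dominant Gram matrix $Q\in\mathbf{S}_+^{|\B|}$ indexed by $\B$, so that $f=(\x^{\B})^TQ\x^{\B}$.

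I would then set $\tilde Q_{\b\g}:=Q_{\b\g}$ if $\b+\g\in\A\cup2\B_*$ and $\tilde Q_{\b\g}:=0$ otherwise, and verify three claims. First, $\tilde Q$ is again a Gram matrix of $f$: the polynomial $(\x^{\B})^TQ\x^{\B}-(\x^{\B})^T\tilde Q\x^{\B}$ only involves exponents $\c\notin\A\cup2\B_*$, and the coefficient of any such $\x^\c$ equals $f_\c=0$ because $\supp(f)\subseteq\A\subseteq\A\cup2\B_*$. Second, $\tilde Q$ vanishes identically on every row and column indexed by $\B\setminus\B_*$: since the chain $\B_1\subseteq\B_2\subseteq\cdots$ stabilizes, $\B_*$ is a fixed point of the monotone operator $\mathcal{S}\mapsto\{\b\in\B:\exists\,\g\in\B,\,\b+\g\in\A\cup2\mathcal{S}\}$, so $\b\notin\B_*$ forces $\b+\g\notin\A\cup2\B_*$ for every $\g\in\B$, i.e.\ $\tilde Q_{\b\g}=0$ for all $\g$. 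Third, $\tilde Q$ is scaled diagonally dominant: if $D\succ0$ is a diagonal matrix making $DQD$ diagonally dominant, then $D\tilde QD$ is diagonally dominant too, since for each $\b\in\B$ either $\b\notin\B_*$ and the whole $\b$-th row of $\tilde Q$ is zero, or $\b\in\B_*$, in which case $2\b\in2\B_*$ so $\tilde Q_{\b\b}=Q_{\b\b}$ while $|\tilde Q_{\b\g}|\le|Q_{\b\g}|$ for all $\g$, so the row-$\b$ dominance inequality for $DQD$ passes to $D\tilde QD$. In particular $\tilde Q\succeq0$. Restricting $\tilde Q$ to the rows and columns indexed by $\B_*$ then gives a scaled diagonally dominant $Q'\in\mathbf{S}_+^{|\B_*|}$ (principal submatrices of scaled diagonally dominant matrices are again scaled diagonally dominant) with $(\x^{\B_*})^TQ'\x^{\B_*}=(\x^{\B})^T\tilde Q\x^{\B}=f$, which is exactly the assertion that $f$ is an SDSOS polynomial in the basis $\B_*$.

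The only delicate step is the third claim. Zeroing out \emph{off-diagonal} entries of a scaled diagonally dominant matrix is harmless, and is already exploited in the proof of Theorem~\ref{sec4-thm1}; the new feature here is that $\tilde Q$ is obtained from $Q$ by also zeroing out some \emph{diagonal} entries, which in general destroys positive semidefiniteness. The point that saves us is precisely the second claim: a diagonal entry $Q_{\b\b}$ is erased only when $\b\notin\B_*$, and then the entire $\b$-th row and column disappear as well, so the offending row trivially satisfies diagonal dominance. This is exactly the role played by the term $2\B_{p-1}$ in the recursion defining the $\B_p$'s, and it is the crux that I expect to require the most care to state cleanly.
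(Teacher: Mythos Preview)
Your argument is correct and follows essentially the same route as the paper's proof: zero out the entries of a scaled diagonally dominant Gram matrix of $f$ at positions $\b+\g\notin\A\cup2\B_*$, use the fixed-point property of $\B_*$, and observe that scaled diagonal dominance survives. The only organizational difference is that the paper restricts to $\B_*$ \emph{first} (taking the principal submatrix $\hat{Q}$ of $Q$) and then zeroes entries, so that on the smaller index set every diagonal entry $\hat{Q}_{\b\b}$ with $\b\in\B_*$ is automatically kept (since $2\b\in2\B_*$) and only off-diagonal entries get erased; this sidesteps the ``delicate step'' you flag about killing diagonal entries. Your version instead keeps the full $|\B|\times|\B|$ matrix, shows via the fixed-point property that rows and columns indexed by $\B\setminus\B_*$ vanish entirely, and restricts at the end. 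Both orderings are valid and amount to the same computation; the paper's ordering is slightly slicker because it never has to discuss diagonal zeroing at all, whereas your ordering makes the role of the fixed-point property (and hence of the term $2\B_{p-1}$ in the recursion) more explicit.
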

\begin{proof}
Let $\B$ be the monomial basis given by the Newton polytope method with $r=|\B|$. If $f\in\SDSOS$, then there exists a scaled diagonally dominant Gram matrix $Q\in\mathbb{S}_+^r$ indexed by $\B$ such that $f=(\x^{\mathscr{B}})^TQ\x^{\mathscr{B}}$. Let $s=|\B_*|$. We then construct a Gram matrix $\tilde{Q}\in\mathbf{S}_+^s$ indexed by $\B_*$ for $f$ as follows:
\begin{equation*}
\tilde{Q}_{\b\g}=
\begin{cases}
Q_{\b\g},\quad&\textrm{if } \b+\g\in\A\cup2\B_*, \\
0,\quad&\textrm{otherwise}.
\end{cases}
\end{equation*}
One can easily check that we still have $f=(\x^{\mathscr{B}_*})^T\tilde{Q}\x^{\mathscr{B}_*}$. Let $\hat{Q}$ be the principal submatrix of $Q$ by deleting the rows and columns whose indices are not in $\B_*$, which is also a scaled diagonally dominant matrix. By construction, $\tilde{Q}$ is obtained from $\hat{Q}$ by replacing certain off-diagonal entries by zeros.
Since replacing off-diagonal entries by zeros does not affect the scaled diagonal dominance of a matrix, $\tilde{Q}$ is also a scaled diagonally dominant matrix. It follows that $f$ is an SDSOS polynomial in the monomial basis $\B_*$.
\end{proof}

\begin{remark}
By Proposition \ref{sec5-prop1}, if we use the monomial basis $\B_*$ for $(\textrm{P}^k)$ \eqref{sec4-ukmom} and $(\textrm{P}^k)^*$ \eqref{sec4-uksos}, we still have the hierarchy of optimal values: $$\lambda^*\ge\lambda_{sos}\ge\cdots\ge\lambda_2\ge\lambda_1\ge\lambda_{sdsos}.$$
\end{remark}

\begin{algorithm}
\renewcommand{\algorithmicrequire}{\textbf{Input:}}
\renewcommand{\algorithmicensure}{\textbf{Output:}}
\caption{${\tt GenerateBasis}$}\label{alg1}
\begin{algorithmic}[1]
\Require
$\A$ and an initial monomial basis $\B$
\Ensure
An increasing chain of potential monomial bases $\{\B_p\}_{p\ge1}$
\State Set $\B_0:=\emptyset$;
\State Let $p=0$;
\While{$p=0$ or $\mathscr{B}_{p}\ne\mathscr{B}_{p-1}$}
\State $p:=p+1$
\State Set $\mathscr{B}_{p}:=\emptyset$;
\For{each pair $\{\b,\g\}$ of $\B$}
\If{$\b+\g\in\A\cup2\B_{p-1}$}
\State $\B_{p}:=\B_{p}\cup\{\b,\g\}$;
\EndIf
\EndFor
\EndWhile
\State \Return{$\{\B_p\}_{p\ge1}$};
\end{algorithmic}
\end{algorithm}

This method enables us to obtain a monomial basis which may be strictly smaller than the monomial basis given by the Newton polytope method as the following example shows.
\begin{example}
Consider the polynomial $f=1+x+x^8$. The monomial basis given by the Newton polytope method is $\B=\{1,x,x^2,x^3,x^4\}$. By the above iterative procedure, we compute that $\B_1=\{1,x,x^4\}$ and $\B_2=\{1,x,x^2,x^4\}$. It turns out that $\B_2$ can serve as a monomial basis to represent $f$ as an SOS.
\end{example}

\bigskip
For the constrained case \newtssos{we use notation of Sec.~\ref{sec4}}. Fix a relaxation order $\hat{d}$ and a sparse order $k$ of the \newtssos{chordal-TSSOS hierarchy}. Then each iteration breaks into two steps.

For Step 1, let the maximal cliques of $G_{j,\hat{d}}^{(k)}$ (\ref{sec4-graph}) be $C_{j,1}^{(k)},C_{j,2}^{(k)},\ldots,C_{j,l_j}^{(k)}$ for $j=0,\ldots,m$. Let
\begin{equation}\label{sec5-eq1}
\mathscr{F}=\supp(f)\cup\bigcup_{j=1}^m(\supp(g_j)+\bigcup_{i=1}^{l_j}(C_{j,i}^{(k)}+C_{j,i}^{(k)})).
\end{equation}
Then call the Algorithm {\tt GenerateBasis} with $\A=\mathscr{F}$ and $\B=\B_{0,\hat{d}}$ to generate a new monomial basis $\B_{0,\hat{d}}'$.

For Step 2, with the new monomial basis $\B_{0,\hat{d}}'$, we compute a new sparsity pattern graph $(G_{j,\hat{d}}^{(k)})'$ for $j=0,\ldots,m$. Then go back to Step 1.

Continue the iterative procedure until $\B_{0,\hat{d}}'=\B_{0,\hat{d}}$.

\section{Computational cost discussion}\label{sec6}
In this section, \newtssos{we provide an estimate for the computational cost associated with the chordal-TSSOS relaxation of POPs with sparse order $k=1$.} For \newtssos{ease of exposition} we only consider the unconstrained case \eqref{sec4-uksos} and use the standard monomial basis $\B=\N^n_d$.

Let $f\in\R[\x]$ be of degree $2d$ and $G_0$ the tsp graph associated with $f$. Let $\{G_k\}_{k\ge1}$ be defined by \eqref{sec3-graph}. By Theorem \ref{sec2-thm}, the complexity of \eqref{sec4-uksos} depends on two factors: the size of maximal cliques of $G_k$ and the number of equality constraints, i.e., $|\supp(G_k)|$. Since we rely on an approximately minimum chordal extension, only a small number of edges are added to $G_0$ in the process of obtaining $G_1$ from the chordal extension of $G_0$. In this case, the complexity of \eqref{sec4-uksos} for $k=1$ depends mainly on the size of maximal cliques of $G_0$ and $|\supp(G_0)|$.

For any $\a=(\alpha_i)\in\N^n$, we call $\a\,(\textrm{mod }2)=(\alpha_i\,(\textrm{mod }2))\in\{0,1\}^n$ the {\em sign type} of $\a$. We say that $\a$ is {\em even} if the sign type of $\a$ is $\mathbf{0}$ and is {\em odd} otherwise.
\begin{proposition}\label{sec6-prop1}
Let $G_0(V,E_0)$ be the tsp graph associated with $f$. Then for any $\b,\g\in V$ with the same sign type, $\{\b,\g\}\in E_0$.
\end{proposition}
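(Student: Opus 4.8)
The plan is to prove the slightly stronger fact that $\b+\g\in 2\B$ whenever $\b,\g\in V=\N^n_d$ have the same sign type. Since $2\B\subseteq\A\cup(2\B)$, and by the very definition of the tsp graph $G_0$ we have $\{\b,\g\}\in E_0$ if and only if $\b+\g\in\A\cup(2\B)$, this stronger statement immediately yields the claim.

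First I would unwind the hypothesis: $\b$ and $\g$ having the same sign type means $\beta_i\equiv\gamma_i\pmod 2$ for every $i$, hence each coordinate sum $\beta_i+\gamma_i$ is even. Therefore the multi-index $\dd:=\tfrac12(\b+\g)$ has all entries in $\N$, i.e.\ $\dd\in\N^n$. Next I would verify the degree constraint so that $\dd$ actually lies in the monomial basis: since $\b,\g\in\N^n_d$ we have $\sum_i\beta_i\le d$ and $\sum_i\gamma_i\le d$, and averaging gives $\sum_i\delta_i=\tfrac12\big(\sum_i\beta_i+\sum_i\gamma_i\big)\le d$, so $\dd\in\N^n_d=\B$. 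Consequently $\b+\g=2\dd\in 2\B\subseteq\A\cup(2\B)$, which by the definition of $E_0$ means $\{\b,\g\}\in E_0$.

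There is essentially no obstacle here: the statement is a direct consequence of the definitions of the tsp graph and of the standard monomial basis $\N^n_d$. The only point that requires a (trivial) verification rather than pure bookkeeping is the degree bound $\sum_i\delta_i\le d$, which is obtained simply by averaging the two inequalities $\sum_i\beta_i\le d$ and $\sum_i\gamma_i\le d$; this is what guarantees that the averaged index $\dd$ stays inside the basis $\B$ and hence that $\b+\g$ genuinely belongs to $2\B$.
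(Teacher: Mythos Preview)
Your proof is correct and is exactly the argument the paper has in mind; the paper's own proof simply reads ``It is immediate from the definition,'' and you have merely unpacked that immediacy by showing $\tfrac12(\b+\g)\in\N^n_d=\B$ via the parity and averaged-degree observations.
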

\begin{proof}
It is immediate from the definition.
\end{proof}
Proposition \ref{sec6-prop1} implies that the nodes of $G_0$ with the same sign type form a clique of $G_0$.

\begin{proposition}\label{sec6-prop2}
Let $C$ be a subset of $\N^n_d$ such that the elements of $C$ have the same sign type. Then $|C|\le\binom{n+\lfloor \frac{d}{2}\rfloor}{\lfloor \frac{d}{2}\rfloor}$.
\end{proposition}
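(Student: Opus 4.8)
The plan is to reduce the claim about a set $C \subseteq \N^n_d$ of monomial exponents all sharing a common sign type $\s \in \{0,1\}^n$ to a counting problem for lattice points in a dilated simplex. First I would write each $\a \in C$ as $\a = \s + 2\bb$ for a unique $\bb \in \N^n$; this is legitimate precisely because every coordinate $\alpha_i$ has the same parity as $s_i$, so $\alpha_i - s_i$ is a nonnegative even integer and $\bb := (\a - \s)/2 \in \N^n$. The map $\a \mapsto \bb$ is injective on $C$, so $|C|$ is bounded by the number of admissible $\bb$'s.

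Next I would control the total degree of $\bb$. Since $\a \in \N^n_d$ we have $\sum_i \alpha_i \le d$, hence $\sum_i (s_i + 2b_i) \le d$, i.e. $2\sum_i b_i \le d - \sum_i s_i \le d$. Therefore $\sum_i b_i \le d/2$, and since $\sum_i b_i$ is an integer, $\sum_i b_i \le \lfloor d/2 \rfloor$. Thus every $\bb$ arising from an element of $C$ lies in $\N^n_{\lfloor d/2 \rfloor}$, and consequently
\begin{equation*}
|C| \le |\N^n_{\lfloor d/2 \rfloor}| = \binom{n + \lfloor d/2 \rfloor}{\lfloor d/2 \rfloor},
\end{equation*}
where the last equality is the standard stars-and-bars count of lattice points in the simplex $\{\bb \in \N^n : \sum_i b_i \le \lfloor d/2 \rfloor\}$, equivalently the number of monomials in $n$ variables of degree at most $\lfloor d/2 \rfloor$.

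There is essentially no hard part here: the statement is a clean parity-plus-counting argument, and the only point requiring a moment's care is the floor, namely passing from $2\sum_i b_i \le d$ to $\sum_i b_i \le \lfloor d/2 \rfloor$ rather than the weaker $d/2$ — which matters when $d$ is odd and the $s_i$ need not all vanish. (One could even get a slightly sharper bound by retaining $\sum_i b_i \le \lfloor (d - |\s|)/2 \rfloor$, but the stated bound suffices and is uniform over sign types.) I would present the argument in the two steps above and invoke the elementary formula $|\N^n_e| = \binom{n+e}{e}$ without further comment.
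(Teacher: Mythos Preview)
Your proof is correct and follows essentially the same approach as the paper: both subtract the common sign type $\s$ and halve to obtain a bijection into $\N^n_{\lfloor d/2\rfloor}$, then apply the standard count $|\N^n_e|=\binom{n+e}{e}$. The paper in fact records the sharper intermediate bound $\binom{n+\lfloor (d-|\s|)/2\rfloor}{\lfloor (d-|\s|)/2\rfloor}$ that you mention parenthetically, before relaxing it to the stated estimate.
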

\begin{proof}
Let $\y=(y_1,\ldots,y_n)$ be a set of variables and $\s$ be the sign type of the elements in $C$. It follows that any $\a\in C$ is a nonnegative integer solution of the following system:
\begin{equation}\label{comp-eq1}
    \begin{cases}
    y_1+y_2+\cdots+y_n\le d,\\
    \y\,(\textrm{mod }2)=\s.
    \end{cases}
\end{equation}
Define $\bar{\y}=(\bar{y}_1,\cdots,\bar{y}_n)$ by
\begin{equation*}
    \bar{y}_i=\begin{cases}
    \frac{y_i}{2},\quad&\textrm{if } y_i\,(\textrm{mod }2)=0,\\
    \frac{y_i-1}{2},\quad&\textrm{if } y_i\,(\textrm{mod }2)=1.
    \end{cases}
\end{equation*}
Let $o$ be the number of subscripts $i$ such that $s_i=1$.
Then the nonnegative integer solution of \eqref{comp-eq1} is in a one-to-one correspondence to the nonnegative integer solution of the following system:
\begin{equation}\label{comp-eq2}
    \bar{y}_1+\bar{y}_2+\cdots+\bar{y}_n\le \lfloor \frac{d-o}{2}\rfloor.
\end{equation}
Since the number of nonnegative integer solutions to \eqref{comp-eq2} is $\binom{n+\lfloor \frac{d-o}{2}\rfloor}{\lfloor \frac{d-o}{2}\rfloor}$, we conclude that the number of nonnegative integer solutions to \eqref{comp-eq1} is also $\binom{n+\lfloor \frac{d-o}{2}\rfloor}{\lfloor \frac{d-o}{2}\rfloor}$. Therefore, we have $|C|\le\binom{n+\lfloor \frac{d-o}{2}\rfloor}{\lfloor \frac{d-o}{2}\rfloor}\le\binom{n+\lfloor \frac{d}{2}\rfloor}{\lfloor \frac{d}{2}\rfloor}$ as $o$ is a nonnegative integer.
\end{proof}

Combining Proposition \ref{sec6-prop1} and Proposition \ref{sec6-prop2}, we conclude that the size of any clique of $G_0$ whose nodes have the same sign type is no more than $\binom{n+\lfloor \frac{d}{2}\rfloor}{\lfloor \frac{d}{2}\rfloor}$.

Suppose $G(V,E)$ is a graph with $V=\B$. We say that an edge $\{\b,\g\}\in E$ is {\em even} if $\b+\g$ is even and is {\em odd} if $\b+\g$ is odd. In other words, an even edge connects two nodes of the same sign type and an odd edge connects two nodes of different sign types. From the definition of $G_0$, odd edges of $G_0$ correspond to odd vectors in $\supp(f)$. If $f$ is sufficiently sparse such that the odd edges of $G_0$ are not too many, then because of the above discussion, the maximal size of maximal cliques of $G_0$ is close to $\binom{n+\lfloor \frac{d}{2}\rfloor}{\lfloor \frac{d}{2}\rfloor}$.

It is known that for a chordal graph, the number of maximal cliques is less than the number of nodes \cite{ga}. Therefore, the number of maximal cliques of $G_0$ is bounded by $\binom{n+d}{d}$.

By construction, we have $|\supp(G_0)|=|\supp(f)\cup2\B|\le|\supp(f)|+\binom{n+d}{d}$.

\begin{example}
Consider the polynomial $f=1+x_1^4+x_2^4+x_3^4-x_1^2x_2^2-x_1^2x_3^2-x_2^2x_3^2+x_2x_3$.
See Figure \ref{complex} for the tsp graph $G_0$ of $f$. There are $6$ maximal cliques for $G_0$, which are of size $4,2,2,1,1,1$ respectively.
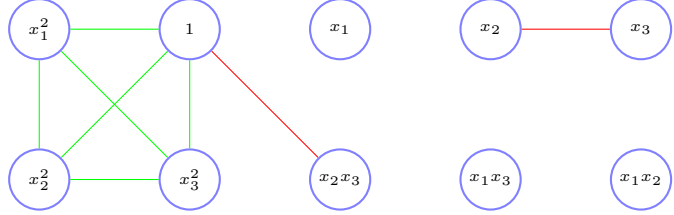
\begin{figure}[htbp]
\caption{The tsp graph $G_0$ of $f$}\label{complex}
\begin{center}
{\tiny
\begin{tikzpicture}[every node/.style={circle, draw=blue!50, thick, minimum size=8mm}]
\node (n1) at (0,0) {$1$};
\node (n8) at (-2,0) {$x_1^2$};
\node (n9) at (-2,-2) {$x_2^2$};
\node (n10) at (0,-2) {$x_3^2$};
\node (n2) at (2,0) {$x_1$};
\node (n3) at (4,0) {$x_2$};
\node (n4) at (6,0) {$x_3$};
\node (n5) at (2,-2) {$x_2x_3$};
\node (n6) at (4,-2) {$x_1x_3$};
\node (n7) at (6,-2) {$x_1x_2$};
\draw[green] (n1)--(n8);
\draw[green] (n1)--(n9);
\draw[green] (n1)--(n10);
\draw[green] (n8)--(n9);
\draw[green] (n8)--(n10);
\draw[green] (n9)--(n10);
\draw[red] (n1)--(n5);
\draw[red] (n3)--(n4);
\end{tikzpicture}}\\
{\small The green edges are even and the red edges are odd. Each node has a self-loop which is not displayed.}
\end{center}
\end{figure} 
\end{example}

On the other hand, for the dense SDP relaxation \eqref{sec2-usos} of \eqref{sec4-uksos}, there is only one SDP matrix which is of size $\binom{n+d}{d}$ and the number of equality constraints is $\binom{n+2d}{2d}$.

Thus we have the following table for the computational cost of the sparse (with the sparse order $k=1$) and dense SDP relaxations of \eqref{sec4-uksos}.
\begin{table}[htbp]
\caption{Computational cost comparison for the sparse and dense SDP relaxations of unconstrained POPs}
\begin{center}
\begin{tabular}{|c|c|c|c|}
\hline
&maximal size of SDP matrices&\#SDP matrices &\#equality constraints\\
\hline
Sparse&$\sim\binom{n+\lfloor \frac{d}{2}\rfloor}{\lfloor \frac{d}{2}\rfloor}$&$\le\binom{n+d}{d}$&$\sim|\supp(f)|+\binom{n+d}{d}$\\
\hline
Dense&$\binom{n+d}{d}$&$1$&$\binom{n+2d}{2d}$\\
\hline
\end{tabular}
\end{center}
\end{table}

We illustrate the above discussion by an explicit example.
\begin{example}
For $n\ge1$, let
\begin{equation}
f_{n}=\sum_{i=1}^n(x_i^2+x_i^4)+\sum_{i=1}^n\sum_{k=1}^n(x_i-x_k)^4.
\end{equation}
The tsp graph $G_0$ for $f_n$ (see Figure \ref{cost}) has $1$ maximal clique of size $n+1$ (involving the nodes $1,x_1^2,\ldots,x_n^2$), $\frac{n(n-1)}{2}$ maximal cliques of size $3$ (involving the nodes $x_i^2,x_j^2,x_ix_j$ for each pair $\{i,j\},i\ne j$) and $n$ maximal cliques of size $1$ (involving the node $x_i$ for each $i$). Note that $G_0$ is already a chordal graph. So we have $G_1=G_0$.
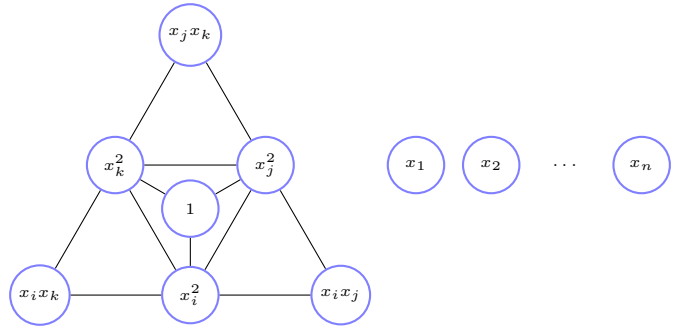
\begin{figure}[htbp]
\caption{The tsp graph $G_0$ of $f_n$}\label{cost}
\begin{center}
{\tiny
\begin{tikzpicture}[main node/.style={circle, draw=blue!50, thick, minimum size=7.5mm}]
\node[main node] (n1) at (1,0) {$x_j^2$};
\node[main node] (n2) at (-1,0) {$x_k^2$};
\node[main node] (n3) at (0,-1.73) {$x_i^2$};
\node[main node] (n10) at (0,-0.58) {$1$};
\draw (n10)--(n2);
\draw (n10)--(n3);
\draw (n10)--(n1);
\draw (n1)--(n2);
\draw (n1)--(n3);
\draw (n2)--(n3);
\node[main node] (n4) at (2,-1.73) {$x_ix_j$};
\draw (n1)--(n4);
\draw (n3)--(n4);
\node[main node] (n5) at (-2,-1.73) {$x_ix_k$};
\draw (n2)--(n5);
\draw (n3)--(n5);
\node[main node] (n6) at (0,1.73) {$x_jx_k$};
\draw (n1)--(n6);
\draw (n2)--(n6);
\node[main node] (n7) at (3,0) {$x_1$};
\node[main node] (n8) at (4,0) {$x_2$};
\node[main node] (n9) at (6,0) {$x_n$};
\path (n8) -- node[auto=false]{$\cdots$} (n9);
\end{tikzpicture}}\\
{\small This is a subgraph of $G_0$. The whole graph $G_0$ is obtained by putting all such subgraphs together. Each node has a self-loop which is not displayed.}
\end{center}
\end{figure} 

The computational cost for the sparse (with the sparse order $k=1$) and dense SDP relaxations of $f_{n}$ with $n$ is displayed in Table \ref{cc}. In the column `\#clique', $i\times j$ means $j$ cliques of size $i$.
\begin{table}[htbp]
\caption{Computational cost comparison for the sparse and dense SDP relaxations of $f_{n}$}\label{cc}
\begin{center}
\begin{tabular}{|c|c|c|}
\hline
&\#clique&\#equality constraint\\
\hline
Sparse&$3\times\frac{n(n-1)}{2},1\times n,(n+1)\times 1$&$\frac{3n(n-1)}{2}+2n+1$\\
\hline
Dense&$\binom{n+2}{2}\times 1$&$\binom{n+4}{4}$\\
\hline
\end{tabular}
\end{center}
\end{table}
\end{example}

\section{Numerical experiments}
\label{sec:benchs}
In this section, we present numerical results of the proposed \newtssos{chordal-TSSOS} hierarchies \eqref{sec4-ukmom}-\eqref{sec4-uksos} and \eqref{sec-eq1}-\eqref{sec6-eq1} for both unconstrained and constrained POPs, respectively.
Our tool, named TSSOS, is implemented in Julia, and constructs instances of the dual SDP problems \eqref{sec4-uksos} and \eqref{sec6-eq1}, then relies on MOSEK to solve them. TSSOS also implements the block moment-SOS hierarchy developed in \cite{wang2}. 
In the following subsections, we compare the performance of TSSOS with the one of GloptiPoly \cite{he} to solve the primal moment problem \eqref{sec2-eq11} of the dense hierarchy, and Yalmip \cite{lo} to solve its dual SOS problem \eqref{sec2-eq12}.
As for TSSOS, GloptiPoly and Yalmip rely on MOSEK to solve SDP problems.

Our TSSOS tool is available on the website:

\begin{center}
\url{https://github.com/wangjie212/TSSOS}.
\end{center}

All numerical examples were computed on an Intel Core i5-8265U@1.60GHz CPU with 8GB RAM memory and the WINDOWS 10 system.

The notation that we use are listed in Table 1.
\begin{table*}[htbp]\label{table1}
\caption{The notation}
\begin{center}
\begin{tabular}{|c|c|}
\hline
$n$&the number of variables\\
\hline
$2d$&the degree\\
\hline
$s$&the number of terms\\
\hline
$\hat{d}$&the relaxation order of Lasserre's hierarchy\\
\hline
$k$&the sparse order of the (block-) chordal-TSSOS hierarchy\\
\hline
bs&the size of initial monomial bases\\
\hline
rbs&the size of reduced monomial bases\\
\hline
\multirow{2}*{mc}&whose $k$-th entry is the maximal size of maximal cliques (blocks)\\
&obtained from the (block-) chordal-TSSOS hierarchy at sparse order $k$\\
\hline
\multirow{2}*{opt}&whose $k$-th entry is the optimum\\
&obtained from the (block-) chordal-TSSOS hierarchy at sparse order $k$\\
\hline
\multirow{2}*{time}&whose $k$-th entry is the time (in seconds) of computing\\
&the solution of the (block-) chordal-TSSOS hierarchy at sparse order $k$\\
\hline
$0$&a number whose absolute value is less than $1e\textrm{-}5$\\
\hline
-&out of memory\\
\hline
\end{tabular}
\end{center}
\end{table*}

\subsection{Unconstrained polynomial optimization problems}
We first present the numerical results for randomly generated polynomials of two types. The first type is of the SOS form. More concretely, we consider the polynomial $$f=\sum_{i=1}^tf_i^2\in\textbf{randpoly1}(n,2d,t,p) \,,$$ constructed as follows: first randomly choose a subset of monomials $M$ from $\x^{\N^n_{d}}$ with probability $p$, and then randomly assign the elements of $M$ to $f_1,\ldots,f_t$ with random coefficients between $-1$ and $1$. We generate $18$ random polynomials $F_1,\ldots,F_{18}$ from $6$ different classes, where $$F_1,F_2,F_3\in\textbf{randpoly1}(8,8,30,0.1),$$ $$F_4,F_5,F_6\in\textbf{randpoly1}(8,10,25,0.04),$$
$$F_{7},F_{8},F_{9}\in\textbf{randpoly1}(9,10,30,0.03),$$
$$F_{10},F_{11},F_{12}\in\textbf{randpoly1}(10,12,20,0.01),$$
$$F_{13},F_{14},F_{15}\in\textbf{randpoly1}(10,16,30,0.003),$$
$$F_{16},F_{17},F_{18}\in\textbf{randpoly1}(12,12,50,0.01).$$
For these polynomials, we first compute a monomial basis using the Newton polytope method (\ref{sec2-eq3}) and then reduce the monomial basis using the methods introduced in Sec.~\ref{sec4}.
Table \ref{tb:randpoly1} displays the numerical results on these polynomials. Note that the time spent to compute a monomial basis is included in the running time to solve the SDP \eqref{sec4-uksos} for $k=1$.
This explains why the total running time is less important at $k=2$ for $F_2$, $F_3$ and $F_4$.

In Table \ref{tb:randpoly1c}, we compare the performance of TSSOS, GloptiPoly and Yalmip on these polynomials. We present the performance of TSSOS for both ``\newtssos{chordal}'' and ``block'' approaches.
In Yalmip, we turn the option ``sos.newton'' on to compute a monomial basis by the Newton polytope method. For TSSOS, we only present the results of the first three chordal-TSSOS hierarchy since for all instances except $F_3$, the sequence of graphs $\{G_k\}_{k\ge1}$ stabilizes in three steps.

As the tables illustrate, TSSOS is significantly faster than GloptiPoly and Yalmip. In addition, TSSOS scales much better than GloptiPoly and Yalmip. GloptiPoly can only handle polynomials with $n=8$, $d=8$ and Yalmip can only handle polynomials with $n\le10$, $d\le12$ due to the memory constraint, while TSSOS can easily handle all of these polynomials. Moreover, one can see that the chordal-TSSOS hierarchy performs much better than the block-TSSOS hierarchy.

\begin{table}[htbp]
\caption{The results for randomly generated polynomials of type \uppercase\expandafter{\romannumeral1}}\label{tb:randpoly1}
\begin{center}
\begin{tabular}{|c|c|c|c|c|c|c|c|c|}
\hline
&$n$&$2d$&$s$&bs&rbs&mc&opt&time\\
\hline
$F_1$&$8$&$8$&$64$&$106$&$49$&$4$&$0$&$0.24$\\
\hline
$F_2$&$8$&$8$&$102$&$122$&$76$&$6,6$&$0,0$&$0.34,0.08$\\
\hline
$F_3$&$8$&$8$&$104$&$150$&$95$&$6,8,11$&$0,0,0$&$0.36,0.05,0.08$\\
\hline
$F_4$&$8$&$10$&$103$&$202$&$75$&$5,5$&$0,0$&$0.58,0.04$\\
\hline
$F_5$&$8$&$10$&$85$&$201$&$70$&$4$&$0$&$0.53$\\
\hline
$F_6$&$8$&$10$&$111$&$128$&$60$&$7$&$0$&$0.38$\\
\hline
$F_{7}$&$9$&$10$&$101$&$145$&$66$&$5$&$0$&$0.50$\\
\hline
$F_{8}$&$9$&$10$&$166$&$178$&$81$&$5$&$0$&$0.72$\\
\hline
$F_{9}$&$9$&$10$&$161$&$171$&$78$&$8$&$0$&$0.79$\\
\hline
$F_{10}$&$10$&$12$&$271$&$223$&$94$&$8$&$0$&$2.2$\\
\hline
$F_{11}$&$10$&$12$&$253$&$176$&$88$&$9$&$0$&$1.6$\\
\hline
$F_{12}$&$10$&$12$&$261$&$204$&$94$&$8$&$0$&$1.8$\\
\hline
$F_{13}$&$10$&$16$&$370$&$1098$&$125$&$9$&$0$&$15$\\
\hline
$F_{14}$&$10$&$16$&$412$&$800$&$139$&$9$&$0$&$14$\\
\hline
$F_{15}$&$10$&$16$&$436$&$618$&$146$&$9$&$0$&$12$\\
\hline
$F_{16}$&$12$&$12$&$488$&$330$&$187$&$8$&$0$&$8.4$\\
\hline
$F_{17}$&$12$&$12$&$351$&$264$&$150$&$8$&$0$&$5.7$\\
 \hline
$F_{18}$&$12$&$12$&$464$&$316$&$179$&$8$&$0$&$7.4$\\
\hline
\end{tabular}
\end{center}
\end{table}

\begin{table}[htbp]
\caption{Comparison with GloptiPoly and Yalmip for randomly generated polynomials of type \uppercase\expandafter{\romannumeral1}}\label{tb:randpoly1c}
\begin{center}
\begin{tabular}{|c|c|c|c|c||c|c|c|c|c|}
\hline
&\multicolumn{4}{c||}{time}&&\multicolumn{4}{c|}{time}\\
\cline{2-5}\cline{7-10}
&\multicolumn{2}{c|}{TSSOS}&\multirow{2}*{GloptiPoly}&\multirow{2}*{Yalmip}&&\multicolumn{2}{c|}{TSSOS}&\multirow{2}*{GloptiPoly}&\multirow{2}*{Yalmip}\\
\cline{2-3}\cline{7-8}
& \newtssos{chordal} &block&&&&\newtssos{chordal}&block&&\\
\hline
$F_1$&$0.24$&$1.7$&$306$&$10$&$F_{10}$&$2.2$&$12$&-&$474$\\
\hline
$F_2$&$0.34$&$4.6$&$348$&$13$&$F_{11}$&$1.6$&$9.2$&-&$147$\\
\hline
$F_3$&$0.36$&$8.8$&$326$&$19$&$F_{12}$&$1.8$&$12$&-&$350$\\
\hline
$F_4$&$0.58$&$4.8$&-&$92$&$F_{13}$&$15$&$36$&-&-\\
\hline
$F_5$&$0.53$&$4.2$&-&$72$&$F_{14}$&$14$&$305$&-&-\\
\hline
$F_6$&$0.38$&$5.2$&-&$22$&$F_{15}$&$12$&$207$&-&-\\
\hline
$F_7$&$0.50$&$3.2$&-&$44$&$F_{16}$&$8.4$&$61$&-&-\\
\hline
$F_8$&$0.72$&$6.5$&-&$143$&$F_{17}$&$5.7$&$17$&-&-\\
\hline
$F_9$&$0.79$&$5.9$&-&$109$&$F_{18}$&$7.4$&$22$&-&-\\
\hline
\end{tabular}
\end{center}
\end{table}

The second type of randomly generated problems are polynomials whose Newton polytopes are scaled standard simplices.
More concretely, we consider polynomials defined by $$f=c_0+\sum_{i=1}^nc_ix_i^{2d}+\sum_{j=1}^{s-n-1}c_j'\x^{\a_j}\in\textbf{randpoly2}(n,2d,s) \,,$$ constructed as follows: we randomly choose coefficients $c_i$ between $0$ and $1$, as well as $s-n-1$ vectors $\a_j$ in $\N^n_{2d-1}\backslash\{\mathbf{0}\}$ with random coefficients $c_j'$ between $-1$ and $1$. We generate $18$ random polynomials $G_1,\ldots,G_{18}$ from $6$ different classes, where $$G_1,G_2,G_3\in\textbf{randpoly2}(8,8,15),$$ $$G_4,G_5,G_6\in\textbf{randpoly2}(9,8,20),$$
$$G_7,G_8,G_9\in\textbf{randpoly2}(9,10,15),$$
$$G_{10},G_{11},G_{12}\in\textbf{randpoly2}(10,8,20),$$
$$G_{13},G_{14},G_{15}\in\textbf{randpoly2}(11,8,20),$$
$$G_{16},G_{17},G_{18}\in\textbf{randpoly2}(12,8,25).$$
Table \ref{tb:randpoly2} displays the numerical results on these polynomials. 
It happens that the second (or the third) sparse hierarchy spends less time than the first one because it involves less maximal cliques than the first one while the sizes of maximal cliques are close.

In Table \ref{tb:randpoly2c}, we compare the performance of TSSOS, GloptiPoly and Yalmip on these polynomials. We present the performance of TSSOS for both ``\newtssos{chordal}'' and ``block'' \cite{wang2} approaches.
In Yalmip, we turn the option ``sos.congruence'' on to take sign-symmetries into account, which allows one to handle slightly more polynomials than GloptiPoly. For TSSOS, we only present the results of the first three \newtssos{chordal-TSSOS} hierarchy since it always converges to the same optimum with the dense moment-SOS relaxation in three steps. 

Again as the tables illustrate, TSSOS is significantly faster than GloptiPoly and Yalmip. Also, TSSOS scales much better than GloptiPoly and Yalmip. GloptiPoly can only handle polynomials with $n=8$, $d=8$ and Yalmip can only handle a portion of polynomials with $n\le11$, $d\le10$ due to the memory constraint, while TSSOS can easily handle all of these polynomials. Moreover, one can see that the chordal-TSSOS hierarchy performs much better than the block-TSSOS hierarchy.

\begin{table}[htbp]
\caption{The results for randomly generated polynomials of type \uppercase\expandafter{\romannumeral2}}\label{tb:randpoly2}
\begin{center}
\begin{tabular}{|c|c|c|c|c|c|c|c|c|}
\hline
&$n$&$2d$&$s$&bs&rbs&mc&opt&time\\
\hline
$G_1$&$8$&$8$&$15$&$495$&$235$&$21,23,28$&$-0.5758,-0.5758,-0.5758$&$0.36,0.21,0.28$\\
\hline
$G_2$&$8$&$8$&$15$&$495$&$328$&$31,33,37$&$-34.69,-34.69,-34.69$&$0.51,0.57,1.7$\\
\hline
$G_3$&$8$&$8$&$15$&$495$&$258$&$21,23,31$&$0.7073,0.7073,0.7073$&$0.31,0.23,0.36$\\
\hline
$G_4$&$9$&$8$&$20$&$715$&$415$&$31,41,127$&$-801.7,-801.7,-801.7$&$1.0,1.8,184$\\
\hline
$G_5$&$9$&$8$&$20$&$715$&$342$&$28,31,40$&$-0.8064,-0.8064,-0.8064$&$0.63,0.45,1.1$\\
\hline
$G_6$&$9$&$8$&$20$&$715$&$340$&$25,43,61$&$-1.698,-1.698,-1.698$&$0.76,1.4,3.6$\\
\hline
$G_{7}$&$9$&$10$&$15$&$2002$&$1254$&$38,41,55$&$-1.295,-1.295,-1.295$&$6.6,5.2,15$\\
\hline
$G_{8}$&$9$&$10$&$15$&$2002$&$894$&$26,28,40$&$-0.6622,-0.6622,-0.6622$&$5.0,1.8,3.5$\\
\hline
$G_{9}$&$9$&$10$&$15$&$2002$&$888$&$28,31,31$&$0.5180,0.5180,0.5180$&$4.9,1.4,2.3$\\
\hline
$G_{10}$&$10$&$8$&$20$&$1001$&$454$&$31,36,42$&$-0.4895,-0.4895,-0.4895$&$1.2,0.73,0.92$\\
\hline
$G_{11}$&$10$&$8$&$20$&$1001$&$414$&$26,33,60$&$0.1732,0.1798,0.1867$&$1.1,0.87,6.0$\\
\hline
$G_{12}$&$10$&$8$&$20$&$1001$&$387$&$23,37,52$&$0.4943,0.4943,0.4943$&$1.0,1.2,2.4$\\
\hline
$G_{13}$&$11$&$8$&$20$&$1365$&$299$&$21,22,22$&$-3.963,-3.963,-3.963$&$1.7,0.26,0.30$\\
\hline
$G_{14}$&$11$&$8$&$20$&$1365$&$412$&$27,33,42$&$-2.184,-2.184,-2.184$&$1.8,0.68,3.3$\\
\hline
$G_{15}$&$11$&$8$&$20$&$1365$&$458$&$27,30,37$&$0.0588,0.0588,0.0588$&$1.9,0.59,0.76$\\
\hline
$G_{16}$&$12$&$8$&$25$&$1820$&$744$&$39,58,81$&$-758.6,-688.0,-688.0$&$4.1,5.8,73$\\
\hline
$G_{17}$&$12$&$8$&$25$&$1820$&$694$&$37,51,76$&$-40.89,-40.22,-40.22$&$3.7,3.7,31$\\
\hline
$G_{18}$&$12$&$8$&$25$&$1820$&$581$&$31,40,48$&$-14.27,-14.27,-14.27$&$2.9,1.3,1.8$\\
\hline
\end{tabular}
\end{center}
\end{table}

\begin{table}[htbp]
\caption{Comparison with GloptiPoly and Yalmip for randomly generated polynomials of type \uppercase\expandafter{\romannumeral2}}\label{tb:randpoly2c}
\begin{center}
\begin{tabular}{|c|c|c|c|c||c|c|c|c|c|}
\hline
&\multicolumn{4}{c||}{time}&&\multicolumn{4}{c|}{time}\\
\cline{2-5}\cline{7-10}
&\multicolumn{2}{c|}{TSSOS}&\multirow{2}*{GloptiPoly}&\multirow{2}*{Yalmip}&&\multicolumn{2}{c|}{TSSOS}&\multirow{2}*{GloptiPoly}&\multirow{2}*{Yalmip}\\
\cline{2-3}\cline{7-8}
& \newtssos{chordal} &block&&&& \newtssos{chordal} &block&&\\
\hline
$G_1$&$0.36$&$8.5$&$346$&$31$&$G_{10}$&$1.2$&$13$&-&-\\
\hline
$G_2$&$0.51$&$2.6$&$447$&$24$&$G_{11}$&$8.0$&$86$&-&$536$\\
\hline
$G_3$&$0.31$&$1.0$&$257$&$6.0$&$G_{12}$&$1.0$&$66$&-&-\\
\hline
$G_4$&$1.0$&$40$&-&-&$G_{13}$&$1.7$&$13$&-&$655$\\
\hline
$G_5$&$0.63$&$24$&-&$363$&$G_{14}$&$1.8$&$37$&-&-\\
\hline
$G_6$&$0.76$&$31$&-&$141$&$G_{15}$&$1.9$&$36$&-&$340$\\
\hline
$G_7$&$6.6$&$24$&-&$322$&$G_{16}$&$10$&$693$&-&-\\
\hline
$G_8$&$5.0$&$28$&-&$233$&$G_{17}$&$7.4$&$333$&-&-\\
\hline
$G_9$&$4.9$&$21$&-&$249$&$G_{18}$&$2.9$&$393$&-&-\\
\hline
\end{tabular}
\end{center}
\end{table}

\bigskip
The Broyden banded function (\cite{waki}) is defined by
\begin{equation*}
    f_{\textrm{Bb}}(\x)=\sum_{i=1}^n(x_i(2+5x_i^2)+1-\sum_{j\in J_i}(1+x_j)x_j)^2,
\end{equation*}
where $J_i=\{j\mid j\ne i, \max(1,i-5)\le j\le\min(n,i+1)\}$.
Table \ref{tb:bdfunction} displays the results of Broyden banded functions for the chordal-TSSOS hierarchy and SparsePOP \cite{WakiKKMS08} (SparsePOP uses SeDuMi as an SDP solver). The optimum is always $0$. So we only provide the data of running time. For this example, since TSSOS and SparsePOP use different SDP solvers, the running time is not comparable directly. We thereby also provide the number of SDP variables involved in TSSOS and SparsePOP respectively. This example shows that even though both tsp and csp apply to a POP, it still happens that the tsp leads to a smaller size of cliques than the csp and thus saves more computational effort for the corresponding SDPs.

\begin{table}[htbp]
\caption{The results for Broyden banded functions}\label{tb:bdfunction}
\begin{center}
\begin{tabular}{|c|c|c|c|c|c|c|}
\hline
\multicolumn{2}{|c|}{$n$}&$6$&$7$&$8$&$9$&$10$\\
\hline
\multirow{2}*{mc}&TSSOS&$15$&$18$&$19$&$22$&$22$\\
\cline{2-7}
&SparsePOP&$84$&$120$&$120$&$120$&$120$\\
\hline
\multirow{2}*{\#SDP variables}&TSSOS&$2792$&$3588$&$4555$&$5247$&$6697$\\
\cline{2-7}
&SparsePOP&$7056$&$14400$&$28800$&$43200$&$57600$\\
\hline
\multirow{2}*{time}&TSSOS&$0.10$&$0.12$&$0.15$&$0.18$&$0.25$\\
\cline{2-7}
&SparsePOP&$2.0$&$9.0$&$20$&$30$&$42$\\
\hline
\end{tabular}
\end{center}
\end{table}

\bigskip
In the final part of this section, we present the numerical results for the following two functions:

$\bullet$ The modified generalized Rosenbrock function
\begin{equation*}
    f_{\textrm{mgR}}(\x)=1+\sum_{i=1}^n(100(x_i-x_{i-1}^2)^2+(1-x_i)^2)+\sum_{i=1}^n\sum_{j=i+1}^nx_i^2x_j^2,
\end{equation*}
which is obtained from the generalized Rosenbrock function by adding monomials terms leading to a complete csp graph, thus yielding dense SDP relaxations.

$\bullet$ The modified chained singular function
\begin{align*}
    f_{\textrm{mcs}}(\x)=&\sum_{i\in J}((x_{i}+10x_{i+1})^2+5(x_{i+2}-x_{i+3})^2+(x_{i+1}-2x_{i+2})^4\\&+10(x_{i}-10x_{i+3})^4+\sum_{i=1}^n\sum_{j=i+1}^nx_i^2x_j^2,
\end{align*}
with $J=\{1,3,5,\ldots,n-3\}$ which is obtained from the chained singular function by adding the same monomial terms as above.

The results for modified generalized Rosenbrock and chained singular functions are displayed in Table \ref{tb:mgrfunction} and  Table \ref{tb:mcsfunction}, respectively. As the tables illustrate, the \newtssos{chordal-TSSOS} hierarchy can handle these functions with variables up to $200$ while the block-TSSOS hierarchy can handle these functions with variables no more than $100$ due to the memory constraint. 

\begin{table}[htbp]
\caption{The results for modified generalized Rosenbrock functions}\label{tb:mgrfunction}
\begin{center}
\begin{tabular}{|c|c|c|c|c|c|c|}
\hline
\multirow{3}*{$n$}&\multicolumn{6}{c|}{TSSOS}\\
\cline{2-7}
&\multicolumn{3}{c|}{\newtssos{chordal}}&\multicolumn{3}{c|}{block}\\
\cline{2-7}
&mc&opt&time&mc&opt&time\\
\hline
$10$&$11$&$8.45$&$0.03$&$28,56$&$8.45,8.45$&$0.06,0.32$\\
\hline
$20$&$21$&$18.35$&$0.12$&$58,211$&$18.35,18.35$&$1.2,50$\\
\hline
$30$&$31$&$28.25$&$0.34$&$88,466$&$28.25,-$&$9,-$\\
\hline
$40$&$41$&$38.15$&$0.87$&$118,821$&$38.15,-$&$42,-$\\
\hline
$50$&$51$&$48.05$&$2.5$&$148,1276$&$48.05,-$&$146,-$\\
\hline
$60$&$61$&$57.95$&$4.1$&$178,1831$&$57.95,-$&$382,-$\\
\hline
$70$&$71$&$67.85$&$9.5$&$218,2486$&$67.85,-$&$786,-$\\
\hline
$80$&$81$&$77.75$&$22$&$248,3241$&$77.75,-$&$4467,-$\\
\hline
$90$&$91$&$87.65$&$28$&$278,4096$&$-,-$&$-,-$\\
\hline
$100$&$101$&$97.55$&$46$&$308,5051$&$-,-$&$-,-$\\
\hline
$120$&$121$&$117.35$&$105$&$368,7261$&$-,-$&$-,-$\\
\hline
$140$&$141$&$137.15$&$243$&$428,9871$&$-,-$&$-,-$\\
\hline
$160$&$161$&$156.95$&$504$&$488,12881$&$-,-$&$-,-$\\
\hline
$180$&$181$&$176.75$&$820$&$548,16291$&$-,-$&$-,-$\\
\hline
$200$&$201$&$196.55$&$1792$&$608,20101$&$-,-$&$-,-$\\
\hline
\end{tabular}
\end{center}
\end{table}

\begin{table}[htbp]
\caption{The results for modified chained singular functions}\label{tb:mcsfunction}
\begin{center}
\begin{tabular}{|c|c|c|c|c|c|c|}
\hline
\multirow{3}*{$n$}&\multicolumn{6}{c|}{TSSOS}\\
\cline{2-7}
&\multicolumn{3}{c|}{\newtssos{chordal}}&\multicolumn{3}{c|}{block}\\
\cline{2-7}
&mc&opt&time&mc&opt&time\\
\hline
$10$&$11$&$-0.0003$&$0.06$&$24,56$&$-0.0006,-0.0007$&$0.07,0.42$\\
\hline
$20$&$21$&$-0.0013$&$0.11$&$49,211$&$-0.0006,-0.0007$&$0.77,78$\\
\hline
$30$&$31$&$-0.0004$&$0.37$&$74,466$&$-0.0002,-$&$3.9,-$\\
\hline
$40$&$41$&$-0.0007$&$0.85$&$99,821$&$-0.0001,-$&$15,-$\\
\hline
$50$&$51$&$-0.0021$&$2.1$&$124,1276$&$-0.0006,-$&$45,-$\\
\hline
$60$&$61$&$-0.0021$&$4.7$&$149,1831$&$-0.0002,-$&$112,-$\\
\hline
$70$&$71$&$-0.0030$&$7.6$&$174,2486$&$-0.0005,-$&$282,-$\\
\hline
$80$&$81$&$-0.0040$&$19$&$199,3241$&$-0.0002,-$&$670,-$\\
\hline
$90$&$91$&$-0.0034$&$23$&$224,4096$&$-0.0004,-$&$1768,-$\\
\hline
$100$&$101$&$-0.0038$&$37$&$249,5051$&$-,-$&$-,-$\\
\hline
$120$&$121$&$-0.0014$&$88$&$274,7261$&$-,-$&$-,-$\\
\hline
$140$&$141$&$-0.0011$&$199$&$299,9871$&$-,-$&$-,-$\\
\hline
$160$&$161$&$-0.0015$&$390$&$324,12881$&$-,-$&$-,-$\\
\hline
$180$&$181$&$-0.0057$&$682$&$349,16291$&$-,-$&$-,-$\\
\hline
$200$&$201$&$-0.0083$&$1126$&$374,20101$&$-,-$&$-,-$\\
\hline
\end{tabular}
\end{center}
\end{table}

\subsection{Constrained polynomial optimization problems}
Now we present the numerical results for constrained polynomial optimization problems. 
We first consider six randomly generated polynomials $H_1,\ldots,H_6$ of type \uppercase\expandafter{\romannumeral2} as objective functions $f$ and minimize them over the two following semialgebraic sets: the unit ball $$\mathbf{K}=\{(x_1,\ldots,x_n)\in\R^n\mid g_1=1-(x_1^2+\cdots+x_n^2)\ge0\} \,,$$ and the unit hypercube $$\mathbf{K}=\{(x_1,\ldots,x_n)\in\R^n\mid g_1=1-x_1^2\ge0,\ldots,g_n=1-x_n^2\ge0\}.$$

The results for the unit ball case are displayed in Table \ref{unitball} and the results for the unit hypercube case are displayed in Table \ref{unithypercube} (only the results of the first three chordal-TSSOS hierarchy are displayed). We compare the performance of the chordal-TSSOS hierarchy with that of GloptiPoly. It can be seen that for each instance TSSOS is significantly faster than GloptiPoly without compromising accuracy.

\begin{table}[htbp]
\caption{The results for minimizing randomly generated polynomials of type \uppercase\expandafter{\romannumeral2} over unit balls}\label{unitball}
\begin{center}
\begin{tabular}{|c|c|c|c|c|c|c|c|c|c|}
\hline
&\multirow{2}*{$(n,2d,s)$}&\multirow{2}*{$\hat{d}$}&\multirow{2}*{$k$}&\multirow{2}*{mc}&\multicolumn{2}{c|}{TSSOS}&\multicolumn{2}{c|}{GloptiPoly}\\
\cline{6-9}
&&&&&opt&time&opt&time\\
\hline
\multirow{6}*{$H_1$}&$\multirow{6}*{(6,8,10)}$&\multirow{3}*{$4$}&$1$&$(28,7)$&\multirow{6}*{$0.1362$}&$0.20$&\multirow{6}*{$0.1362$}&\multirow{3}*{$8.0$}\\
\cline{4-5}\cline{7-7}
&&&$2$&$(32,12)$& &$0.52$&&\\
\cline{4-5}\cline{7-7}
&&&$3$&$(37,20)$& &$0.86$&&\\
\cline{3-5} \cline{7-7} \cline{9-9}
&&\multirow{3}*{$5$}&$1$&$(29,28)$& &$0.91$& &\multirow{3}*{$80$}\\
\cline{4-5}\cline{7-7}
&&&$2$&$(35,30)$& &$3.0$&&\\
\cline{4-5}\cline{7-7}
&&&$3$&$(48,45)$& &$9.0$&&\\
\hline
\multirow{6}*{$H_2$}&$\multirow{6}*{(7,8,12)}$&\multirow{3}*{$4$}&$1$&$(36,8)$&\multirow{6}*{$0.1373$}&$0.36$&\multirow{3}*{$0.1373$}&\multirow{3}*{$34$}\\
\cline{4-5}\cline{7-7}
&&&$2$&$(36,10)$&&$0.52$&&\\
\cline{4-5}\cline{7-7}
&&&$3$&$(38,15)$&&$1.6$&&\\
\cline{3-5} \cline{7-9}
&&\multirow{3}*{$5$}&$1$&$(36,36)$&&$1.9$&\multirow{3}*{-}&\multirow{3}*{-}\\
\cline{4-5}\cline{7-7}
&&&$2$&$(45,36)$&&$3.9$&&\\
\cline{4-5}\cline{7-7}
&&&$3$&$(59,49)$&&$34$&&\\
\hline
\multirow{6}*{$H_3$}&$\multirow{6}*{(8,8,15)}$&\multirow{3}*{$4$}&$1$&$(45,9)$&\multirow{6}*{$0.1212$}&$0.75$&\multirow{3}*{$0.1212$}&\multirow{3}*{$225$}\\
\cline{4-5}\cline{7-7}
&&&$2$&$(45,10)$&&$1.3$&&\\
\cline{4-5}\cline{7-7}
&&&$3$&$(53,25)$&&$20$&&\\
\cline{3-5} \cline{7-9}
&&\multirow{3}*{$5$}&$1$&$(45,45)$&&$5.3$&\multirow{3}*{-}&\multirow{3}*{-}\\
\cline{4-5}\cline{7-7}
&&&$2$&$(45,45)$&&$7.5$&&\\
\cline{4-5}\cline{7-7}
&&&$3$&$(59,46)$&&$94$&&\\
\hline
\multirow{6}*{$H_4$}&$\multirow{6}*{(9,6,15)}$&\multirow{3}*{$3$}&$1$&$(10,10)$&\multirow{6}*{$0.8704$}&$0.15$&\multirow{3}*{$0.8704$}&\multirow{3}*{$16$}\\
\cline{4-5}\cline{7-7}
&&&$2$&$(10,10)$&&$0.22$&&\\
\cline{4-5}\cline{7-7}
&&&$3$&$(10,10)$&&$0.25$&&\\
\cline{3-5} \cline{7-9}
&&\multirow{3}*{$4$}&$1$&$(55,10)$&&$1.3$&\multirow{3}*{-}&\multirow{3}*{-}\\
\cline{4-5}\cline{7-7}
&&&$2$&$(55,13)$&&$2.0$&&\\
\cline{4-5}\cline{7-7}
&&&$3$&$(56,19)$&&$2.8$&&\\
\hline
\multirow{6}*{$H_5$}&$\multirow{6}*{(10,6,20)}$&\multirow{3}*{$3$}&$1$&$(12,11)$&\multirow{6}*{$0.5966$}&$0.22$&\multirow{3}*{$0.5966$}&\multirow{3}*{$48$}\\
\cline{4-5}\cline{7-7}
&&&$2$&$(13,14)$&&$0.42$&&\\
\cline{4-5}\cline{7-7}
&&&$3$&$(19,16)$&&$0.95$&&\\
\cline{3-5} \cline{7-9}
&&\multirow{3}*{$4$}&$1$&$(66,13)$&&$2.5$&\multirow{3}*{-}&\multirow{3}*{-}\\
\cline{4-5}\cline{7-7}
&&&$2$&$(66,23)$&&$10$&&\\
\cline{4-5}\cline{7-7}
&&&$3$&$(75,44)$&&$88$&&\\
\hline
\multirow{6}*{$H_6$}&$\multirow{6}*{(11,6,20)}$&\multirow{3}*{$3$}&$1$&$(12,12)$&\multirow{6}*{$0.1171$}&$0.28$&\multirow{3}*{$0.1171$}&\multirow{3}*{$115$}\\
\cline{4-5}\cline{7-7}
&&&$2$&$(15,12)$&&$0.36$&&\\
\cline{4-5}\cline{7-7}
&&&$3$&$(16,13)$&&$0.60$&&\\
\cline{3-5} \cline{7-9}
&&\multirow{3}*{$4$}&$1$&$(78,14)$&&$4.4$&\multirow{3}*{-}&\multirow{3}*{-}\\
\cline{4-5}\cline{7-7}
&&&$2$&$(78,15)$&&$4.7$&&\\
\cline{4-5}\cline{7-7}
&&&$3$&$(78,13)$&&$7.5$&&\\
\hline
\end{tabular}\\
{\small In this table, the first entry of mc is the maximal size of maximal cliques corresponding to the moment matrix $M_{\hat{d}}(\y)$ and the second entry of mc is the maximal size of maximal cliques corresponding to the localizing matrix $M_{\hat{d}-d_1}(g_1\y)$.}
\end{center}
\end{table}

\begin{table}[htbp]
\caption{The results for minimizing randomly generated polynomials of type \uppercase\expandafter{\romannumeral2} over unit hypercubes}\label{unithypercube}
\begin{center}
\begin{tabular}{|c|c|c|c|c|c|c|c|c|c|}
\hline
&\multirow{2}*{$(n,2d,s)$}&\multirow{2}*{$\hat{d}$}&\multirow{2}*{$k$}&\multirow{2}*{mc}&\multicolumn{2}{c|}{TSSOS}&\multicolumn{2}{c|}{GloptiPoly}\\
\cline{6-9}
&&&&&opt&time&opt&time\\
\hline
\multirow{6}*{$H_1$}&$\multirow{6}*{(6,8,10)}$&\multirow{3}*{$4$}&$1$&$(28,8)$&\multirow{6}*{$-0.4400$}&$0.33$&\multirow{6}*{$-0.4400$}&\multirow{3}*{$19$}\\
\cline{4-5}\cline{7-7}
&&&$2$&$(32,12)$&&$0.86$&&\\
\cline{4-5}\cline{7-7}
&&&$3$&$(37,19)$&&$1.4$&&\\
\cline{3-5} \cline{7-7} \cline{9-9}
&&\multirow{3}*{$5$}&$1$&$(29,28)$&&$1.7$&&\multirow{3}*{$237$}\\
\cline{4-5}\cline{7-7}
&&&$2$&$(35,30)$&&$6.1$&&\\
\cline{4-5}\cline{7-7}
&&&$3$&$(48,45)$&&$16$&&\\
\hline
\multirow{6}*{$H_2$}&$\multirow{6}*{(7,8,12)}$&\multirow{3}*{$4$}&$1$&$(36,9)$&\multirow{6}*{$-0.1289$}&$0.80$&\multirow{3}*{$-0.1289$}&\multirow{3}*{$101$}\\
\cline{4-5}\cline{7-7}
&&&$2$&$(36,10)$&&$0.94$&&\\
\cline{4-5}\cline{7-7}
&&&$3$&$(38,13)$&&$1.9$&&\\
\cline{3-5} \cline{7-9}
&&\multirow{3}*{$5$}&$1$&$(36,36)$&&$3.6$&\multirow{3}*{-}&\multirow{3}*{-}\\
\cline{4-5}\cline{7-7}
&&&$2$&$(45,36)$&&$6.8$&&\\
\cline{4-5}\cline{7-7}
&&&$3$&$(59,49)$&&$61$&&\\
\hline
\multirow{6}*{$H_3$}&$\multirow{6}*{(8,8,15)}$&\multirow{3}*{$4$}&$1$&$(45,10)$&\multirow{6}*{$-0.1465$}&$1.0$&\multirow{3}*{$-0.1465$}&\multirow{3}*{$433$}\\
\cline{4-5}\cline{7-7}
&&&$2$&$(45,11)$&&$2.0$&&\\
\cline{4-5}\cline{7-7}
&&&$3$&$(53,22)$&&$24$&&\\
\cline{3-5} \cline{7-9}
&&\multirow{3}*{$5$}&$1$&$(45,45)$&&$9.6$&\multirow{3}*{-}&\multirow{3}*{-}\\
\cline{4-5}\cline{7-7}
&&&$2$&$(45,45)$&&$13$&&\\
\cline{4-5}\cline{7-7}
&&&$3$&$(59,50)$&&$112$&&\\
\hline
\multirow{6}*{$H_4$}&$\multirow{6}*{(9,6,15)}$&\multirow{3}*{$3$}&$1$&$(10,10)$&\multirow{6}*{$0.1199$}&$0.27$&\multirow{3}*{$0.1199$}&\multirow{3}*{$27$}\\
\cline{4-5}\cline{7-7}
&&&$2$&$(10,10)$&&$0.30$&&\\
\cline{4-5}\cline{7-7}
&&&$3$&$(10,10)$&&$0.32$&&\\
\cline{3-5} \cline{7-9}
&&\multirow{3}*{$4$}&$1$&$(55,11)$&&$2.1$&\multirow{3}*{-}&\multirow{3}*{-}\\
\cline{4-5}\cline{7-7}
&&&$2$&$(55,13)$&&$2.8$&&\\
\cline{4-5}\cline{7-7}
&&&$3$&$(56,19)$&&$4.0$&&\\
\hline
\multirow{6}*{$H_5$}&$\multirow{6}*{(10,6,20)}$&\multirow{3}*{$3$}&$1$&$(12,11)$&\multirow{6}*{$-0.2813$}&$0.38$&\multirow{3}*{$-0.2813$}&\multirow{3}*{$69$}\\
\cline{4-5}\cline{7-7}
&&&$2$&$(13,12)$&&$0.50$&&\\
\cline{4-5}\cline{7-7}
&&&$3$&$(19,13)$&&$0.70$&&\\
\cline{3-5} \cline{7-9}
&&\multirow{3}*{$4$}&$1$&$(66,14)$&&$4.4$&\multirow{3}*{-}&\multirow{3}*{-}\\
\cline{4-5}\cline{7-7}
&&&$2$&$(66,23)$&&$15$&&\\
\cline{4-5}\cline{7-7}
&&&$3$&$(75,44)$&&$80$&&\\
\hline
\multirow{6}*{$H_6$}&$\multirow{6}*{(11,6,20)}$&\multirow{3}*{$3$}&$1$&$(12,12)$&\multirow{6}*{$-0.2316$}&$0.47$&\multirow{3}*{$-0.2316$}&\multirow{3}*{$211$}\\
\cline{4-5}\cline{7-7}
&&&$2$&$(15,12)$&&$0.61$&&\\
\cline{4-5}\cline{7-7}
&&&$3$&$(16,13)$&&$0.76$&&\\
\cline{3-5} \cline{7-9}
&&\multirow{3}*{$4$}&$1$&$(78,13)$&&$7.5$&\multirow{3}*{-}&\multirow{3}*{-}\\
\cline{4-5}\cline{7-7}
&&&$2$&$(78,15)$&&$9.9$&&\\
\cline{4-5}\cline{7-7}
&&&$3$&$(78,13)$&&$13$&&\\
\hline
\end{tabular}\\
{\small In this table, the first entry of mc is the maximal size of maximal cliques corresponding to the moment matrix $M_{\hat{d}}(\y)$ and the second entry of mc is the maximal size of maxiaml cliques corresponding to the localizing matrices $M_{\hat{d}-d_j}(g_j\y),j=1,\ldots,n$.}
\end{center}
\end{table}

\bigskip
Next we present the numerical results of the following two functions over the unit ball.

$\bullet$ The Broyden tridiagonal function
\begin{align*}
    f_{\textrm{Bt}}(\x)=&((3-2x_1)x_1-2x_2+1)^2+\sum_{i=1}^{n-1}((3-2x_i)x_i-x_{i-1}-2x_{i+1}+1)^2\\&+((3-2x_n)x_n-x_{n-1}+1)^2.
\end{align*}

$\bullet$ The generalized Rosenbrock function
\begin{equation*}
    f_{\textrm{gR}}(\x)=1+\sum_{i=1}^n(100(x_i-x_{i-1}^2)^2+(1-x_i)^2).
\end{equation*}

Since the constraint of unit balls involves all of the variables, the csp graph for these problems is clearly complete.
The results for generalized Rosenbrock functions are displayed in Table \ref{tb:gRfunction} and the results for Broyden tridiagonal functions are displayed in Table \ref{tb:mBtfunction}. The relaxation order is $2$. We present the performance of TSSOS for both ``chordal'' and ``block'' approaches. As the tables illustrate, again the \newtssos{chordal-TSSOS} hierarchy performs much better than the block-TSSOS hierarchy. 

\begin{table}[htbp]
\caption{The results for generalized Rosenbrock functions}\label{tb:gRfunction}
\begin{center}
\begin{tabular}{|c|c|c|c|c|c|c|c|c|}
\hline
\multirow{3}*{$n$}&\multicolumn{8}{c|}{TSSOS}\\
\cline{2-9}
&\multicolumn{4}{c|}{\newtssos{chordal}}&\multicolumn{4}{c|}{block}\\
\cline{2-9}
&$k$&mc&opt&time&$k$&mc&opt&time\\
\hline
\multirow{2}*{$10$}&\multirow{2}*{$1$}&\multirow{2}*{$(11,2)$}&\multirow{2}*{$8.35$}&\multirow{2}*{$0.05$}&$1$&$(28,10)$&$8.35$&$0.22$\\
\cline{6-9}
&&&&&$2$&$(56,10)$&$8.35$&$0.32$\\
\hline
\multirow{2}*{$20$}&\multirow{2}*{$1$}&\multirow{2}*{$(21,2)$}&\multirow{2}*{$18.25$}&\multirow{2}*{$0.19$}&$1$&$(58,20)$&$18.25$&$8.2$\\
\cline{6-9}
&&&&&$2$&$(211,20)$&$18.25$&$45$\\
\hline
\multirow{2}*{$30$}&\multirow{2}*{$1$}&\multirow{2}*{$(31,2)$}&\multirow{2}*{$28.15$}&\multirow{2}*{$0.49$}&$1$&$(88,30)$&$28.15$&$203$\\
\cline{6-9}
&&&&&$2$&$(466,30)$&-&-\\
\hline
\multirow{2}*{$40$}&\multirow{2}*{$1$}&\multirow{2}*{$(41,2)$}&\multirow{2}*{$38.05$}&\multirow{2}*{$1.3$}&$1$&$(118,40)$&-&-\\
\cline{6-9}
&&&&&$2$&$(821,40)$&-&-\\
\hline
\multirow{2}*{$50$}&\multirow{2}*{$1$}&\multirow{2}*{$(51,2)$}&\multirow{2}*{$47.95$}&\multirow{2}*{$4.0$}&$1$&$(148,50)$&-&-\\
\cline{6-9}
&&&&&$2$&$(1276,50)$&-&-\\
\hline
\multirow{2}*{$60$}&\multirow{2}*{$1$}&\multirow{2}*{$(61,2)$}&\multirow{2}*{$57.85$}&\multirow{2}*{$6.6$}&$1$&$(178,60)$&-&-\\
\cline{6-9}
&&&&&$2$&$(1831,60)$&-&-\\
\hline
\multirow{2}*{$70$}&\multirow{2}*{$1$}&\multirow{2}*{$(71,2)$}&\multirow{2}*{$67.75$}&\multirow{2}*{$18$}&$1$&$(218,70)$&-&-\\
\cline{6-9}
&&&&&$2$&$(2486,70)$&-&-\\
\hline
\multirow{2}*{$80$}&\multirow{2}*{$1$}&\multirow{2}*{$(81,2)$}&\multirow{2}*{$77.65$}&\multirow{2}*{$26$}&$1$&$(248,78)$&-&-\\
\cline{6-9}
&&&&&$2$&$3241,80)$&-&-\\
\hline
\multirow{2}*{$90$}&\multirow{2}*{$1$}&\multirow{2}*{$(91,2)$}&\multirow{2}*{$87.55$}&\multirow{2}*{$50$}&$1$&$(278,90)$&-&-\\
\cline{6-9}
&&&&&$2$&$(4096,90)$&-&-\\
\hline
\multirow{2}*{$100$}&\multirow{2}*{$1$}&\multirow{2}*{$(101,2)$}&\multirow{2}*{$97.45$}&\multirow{2}*{$85$}&$1$&$(308,100)$&-&-\\
\cline{6-9}
&&&&&$2$&$(5051,100)$&-&-\\
\hline
\multirow{2}*{$120$}&\multirow{2}*{$1$}&\multirow{2}*{$(121,2)$}&\multirow{2}*{$117.25$}&\multirow{2}*{$186$}&$1$&$(368,120)$&-&-\\
\cline{6-9}
&&&&&$2$&$(7261,120)$&-&-\\
\hline
\multirow{2}*{$140$}&\multirow{2}*{$1$}&\multirow{2}*{$(141,2)$}&\multirow{2}*{$137.05$}&\multirow{2}*{$448$}&$1$&$(428,140)$&-&-\\
\cline{6-9}
&&&&&$2$&$(9871,140)$&-&-\\
\hline
\multirow{2}*{$160$}&\multirow{2}*{$1$}&\multirow{2}*{$(161,2)$}&\multirow{2}*{$156.85$}&\multirow{2}*{$841$}&$1$&$(488,160)$&-&-\\
\cline{6-9}
&&&&&$2$&$(12881,160)$&-&-\\
\hline
\multirow{2}*{$180$}&\multirow{2}*{$1$}&\multirow{2}*{$(181,2)$}&\multirow{2}*{$176.65$}&\multirow{2}*{$1495$}&$1$&$(548,180)$&-&-\\
\cline{6-9}
&&&&&$2$&$(16291,180)$&-&-\\
\hline
\end{tabular}\\
{\small In this table, the first entry of mc is the maximal size of maximal cliques corresponding to the moment matrix $M_{\hat{d}}(\y)$ and the second entry of mc is the maximal size of maximal cliques corresponding to the localizing matrix $M_{\hat{d}-d_1}(g_1\y)$.}
\end{center}
\end{table}

\begin{table}[htbp]
\caption{The results for Broyden tridiagonal functions}\label{tb:mBtfunction}
\begin{center}
\begin{tabular}{|c|c|c|c|c|c|c|c|c|}
\hline
\multirow{3}*{$n$}&\multicolumn{8}{c|}{TSSOS}\\
\cline{2-9}
&\multicolumn{4}{c|}{\newtssos{chordal}}&\multicolumn{4}{c|}{block}\\
\cline{2-9}
&$k$&mc&opt&time&$k$&mc&opt&time\\
\hline
\multirow{2}*{$10$}&$1$&$(13,5)$&$5.15$&$0.07$&$1$&$(38,11)$&$5.15$&$0.20$\\
\cline{2-9}
&$2$&$(13,5)$&$5.15$&$0.08$&$2$&$(66,11)$&$5.15$&$0.32$\\
\hline
\multirow{2}*{$20$}&$1$&$(23,5)$&$15.04$&$0.37$&$1$&$(78,21)$&$15.04$&$11$\\
\cline{2-9}
&$2$&$(23,7)$&$15.04$&$0.50$&$2$&$(231,21)$&$15.04$&$73$\\
\hline
\multirow{2}*{$30$}&$1$&$(33,5)$&$25.01$&$1.2$&$1$&$(118,31)$&$25.01$&$174$\\
\cline{2-9}
&$2$&$(33,7)$&$25.01$&$2.1$&$2$&$(496,31)$&$25.01$&$-$\\
\hline
\multirow{2}*{$40$}&$1$&$(43,5)$&$35.00$&$3.6$&$1$&$(158,41)$&$-$&$-$\\
\cline{2-9}
&$2$&$(43,7)$&$35.00$&$7.8$&$2$&$(861,41)$&$-$&$-$\\
\hline
\multirow{2}*{$50$}&$1$&$(53,5)$&$44.99$&$9.0$&$1$&$(198,51)$&$-$&$-$\\
\cline{2-9}
&$2$&$(53,7)$&$44.99$&$21$&$2$&$(1326,51)$&$-$&$-$\\
\hline
\multirow{2}*{$60$}&$1$&$(65,5)$&$54.99$&$18$&$1$&$(238,61)$&$-$&$-$\\
\cline{2-9}
&$2$&$(65,7)$&$54.99$&$48$&$2$&$(1891,61)$&$-$&$-$\\
\hline
\multirow{2}*{$70$}&$1$&$(73,5)$&$64.99$&$41$&$1$&$(278,71)$&$-$&$-$\\
\cline{2-9}
&$2$&$(73,7)$&$64.99$&$138$&$2$&$(2556,71)$&$-$&$-$\\
\hline
\multirow{2}*{$80$}&$1$&$(83,5)$&$74.99$&$67$&$1$&$(318,81)$&$-$&$-$\\
\cline{2-9}
&$2$&$(83,7)$&$74.99$&$240$&$2$&$(3321,81)$&$-$&$-$\\
\hline
\multirow{2}*{$90$}&$1$&$(93,5)$&$84.99$&$110$&$1$&$(348,91)$&$-$&$-$\\
\cline{2-9}
&$2$&$(93,7)$&$84.99$&$193$&$2$&$(4186,91)$&$-$&$-$\\
\hline
\multirow{2}*{$100$}&$1$&$(103,5)$&$94.98$&$188$&$1$&$(378,101)$&$-$&$-$\\
\cline{2-9}
&$2$&$(103,7)$&$94.98$&$299$&$2$&$(5151,101)$&$-$&$-$\\
\hline
\multirow{2}*{$120$}&$1$&$(123,4)$&$114.98$&$374$&$1$&$(458,121)$&$-$&$-$\\
\cline{2-9}
&$2$&$(123,8)$&$114.98$&$864$&$2$&$(7381,121)$&$-$&$-$\\
\hline
\end{tabular}\\
{\small In this table, the first entry of mc is the maximal size of maximal cliques corresponding to the moment matrix $M_{\hat{d}}(\y)$ and the second entry of mc is the maximal size of maximal cliques corresponding to the localizing matrix $M_{\hat{d}-d_1}(g_1\y)$.}
\end{center}
\end{table}

\section{Conclusions and outlooks}\label{cons}
In this paper, we continue to exploit the term-sparsity pattern (tsp) graph for a POP, \newtssos{as a follow-up of our previous work \cite{wang2}}. 
Through the support-extension and chordal-extension operations, we iteratively enlarge this tsp graph to obtain a \newtssos{chordal-TSSOS} two-level hierarchy for POPs. 
Various numerical examples demonstrate the efficiency and the scalability of this new hierarchy for unconstrained and constrained POPs.

There are still many questions leaving for further investigations:

1) When relying on the dense moment-SOS hierarchy, one can extract the global optimizers under certain flatness conditions of the moment matrix \cite{henrion2005detecting}. 
A similar procedure exists for the sparse moment-SOS hierarchy based on correlative sparsity \cite{Las06,ncsparse}.
It is worth looking for a similar condition when relying on our \newtssos{chordal-TSSOS} hierarchy.

2) We have the freedom to choose a specific chordal extension. The size of maximal cliques, the convergence and the convergence rate of the \newtssos{chordal-TSSOS} hierarchy highly depend on this choice. 
In \cite{wang2}, we have studied the block-TSSOS hierarchy which differs from the \newtssos{chordal-TSSOS} hierarchy by  selecting the chordal-extension operation instead of the completion of connected components. 
This choice leads to a maximal chordal-extension.
In this paper, we mainly focus on an  (approximately) minimal chordal extension. Usually, this extension provides maximal cliques of size smaller than the maximal chordal extension, at the cost of loosing theoretical convergence guarantees.
However, as \cite{sli} recently suggests, the  minimal chordal extension is not always optimal. 
To the best of our knowledge, people paid little attention to chordal extensions that are not (approximately) minimal. 
Therefore it is a future task to explore more general choices of chordal extensions as well as to look for an optimal one for specific POPs.

3) The \newtssos{chordal-TSSOS} hierarchy can be easily combined with other techniques for handling large-scale POPs, e.g., correlative sparsity  \cite{waki} or structured subsets \cite{mi}. We plan to do it in a follow-up paper.

\end{document}